\def\K{{ \mathbb{K}}}
\def\R{\mathbb{R}}
\newcommand{\minor}[1]{\textcolor{blue}{#1}}
\newcommand{\p}{\mathcal{P}}
\newcounter{thm}
\newcounter{ex}
\newcounter{re}
\newtheorem{Theorem}[thm]{Theorem}
\newtheorem{Proposition}[thm]{Proposition}
\newtheorem{example}[thm]{Example}
\newtheorem{remark}[thm]{Remark}
\newtheorem{Definition}[thm]{Definition}
\title{Linearizations and optimization problems in diffeological spaces}
\author{Jean-Pierre Magnot}
\date{}
\address{{LAREMA, Universit\'e d’Angers, 2 Bd Lavoisier, 
49045 Angers cedex 1, France;  Lyc\'ee Jeanne d'Arc, 40 avenue de Grande Bretagne, 63000 Clermont-Ferrand, 
France}; Lepage Research Institute, 17 novembra 1, 081 16 Presov, Slovakia}
\email{\small magnot@math.cnrs.fr; jean-pierr.magnot@ac-clermont.fr}
\begin{document}

\begin{abstract}
By generalizing the notion of linearization, a concept originally arising from microlocal analysis and symbolic calculus, to diffeological spaces, we make a first proposal setting for optimization problems in this category. 

We show how linearizations allow the construction of smooth paths and variational flows without requiring canonical charts or gradients. With these constructions, 
we introduce a general optimization algorithm adapted to diffeological spaces under weakened assumptions. The method applies to spaces of mappings with low regularity.

Our results show that weak convergence toward minima or critical values can still be achieved under diffeological conditions. The approach extends classical variational methods into a flexible, non-linear infinite-dimensional framework. Preliminary steps to the search for fixed points of diffeological mappings are discussed.
\end{abstract}
\maketitle
\textbf{MSC(2020):} 49J27, 53C15, 90C53

\textbf{Keywords:} Diffeology, linearization, minimization method

\section*{Introduction}

The theory of diffeological spaces provides a natural framework for analyzing smooth structures beyond the realm of finite-dimensional manifolds, particularly in contexts where classical differential geometry is insufficient. This flexibility suggests diffeologies to be an essential tool in the study of mapping spaces, infinite-dimensional geometry, and topological vector spaces with weak topologies.

One of the central challenges in applying differential techniques in such settings lies in the absence of canonical local charts or metrics, which are typically used in optimization and variational analysis on manifolds. \minor{See also \cite{Boumal2023,Absil2008}.} 
In this article, we explore and generalize the notion of linearization, originally introduced by Bokobza-Haggiag in the study of pseudo-differential operators on manifolds, and extended later by Widom through symbolic calculus in microlocal analysis. \minor{\cite{Wid}} \minor{\cite{BK}} We reinterpret these constructions along the lines of diffeological geometry, and show how they can be leveraged to define optimization algorithms on diffeological spaces. In particular, we introduce a class of minimization methods in the spirit of well-known steepest descent methods, that can be intuitively understood as generalizations of gradient-type methods that do not require explicit knowledge of gradients, but only of a functional and its evaluation along smooth paths.

We then apply this framework to the setting of mapping spaces with low regularity, such as Sobolev spaces \( W^{s,p}(M,N) \), where \( M \) and \( N \) are compact manifolds and \( s \in \mathbb{R} \), \( 1 < p < \infty \). We analyze how linearizations of the target manifold \( N \) naturally induce linearizations on the corresponding mapping spaces, and establish convergence results for the associated variational algorithm under weak regularity assumptions. This allows us to treat optimization problems in topologies that lack Banach manifold structures, extending classical results into the diffeological domain. A discussion on the construction of invariant spaces, or fixed points, of smooth functions ends the study.

\section{Preliminaries on diffeologies}
\subsection{Diffeologies and Fr\"olicher spaces}

In this section, we follow the presentations from \cite{Ma2013, MR2016}, and complete the classical exposition \cite{Igdiff} along the lines of the recent survey \cite{GMW2023}. 

\begin{Definition}
	Let $X$ be a set.
	
	\begin{itemize}
		\item A \textbf{$p$-parametrization} of dimension $p$ on $X$ is a map from an open subset $O \subset \R^p$ to $X$.
		
		\item A \textbf{diffeology} on $X$ is a set $\mathcal{P}$ of parametrizations satisfying:
		\begin{enumerate}
			\item All constant maps $\R^p \to X$ belong to $\mathcal{P}$.
			\item If $\{f_i: O_i \to X\}_{i \in I}$ is a family of plots in $\mathcal{P}$ compatible on overlaps, and if they define a map $f: \bigcup_{i} O_i \to X$, then $f \in \mathcal{P}$.
			\item If $f \in \mathcal{P}$ and $g \in C^\infty(O', O)$, then $f \circ g \in \mathcal{P}$.
		\end{enumerate}
	\end{itemize}
	
	A pair $(X, \mathcal{P})$ is called a \textbf{diffeological space}. Given two diffeological spaces $(X, \mathcal{P})$ and $(X', \mathcal{P}')$, a map $f: X \to X'$ is said to be \textbf{smooth} if $f\circ \minor{c} \in \mathcal{P}'$ for every $p \in \mathcal{P}$.
\end{Definition}

The concept of diffeological space was introduced by J.-M. Souriau \cite{Sou}, inspired by early observations of Chen \cite{Chen}. A foundational treatment can be found in \cite{Igdiff}.

\begin{Definition}
	Let $(X,\mathcal{P})$ be a diffeological space. A diffeology $\mathcal{P}'$ on $X$ is called a \textbf{subdiffeology} of $\mathcal{P}$ if $\mathcal{P}' \subset \mathcal{P}$, or equivalently, if the identity map $\mathrm{Id}_X: (X,\mathcal{P}') \to (X,\mathcal{P})$ is smooth.
\end{Definition}

\begin{example}
	Every nonempty set $X$ carries a minimal diffeology, consisting only of constant maps. This is called the \textbf{discrete diffeology}.
\end{example}

The category of diffeological spaces is extremely broad and includes various pathological examples. To control this, one can work within the more structured category of Fr\"olicher spaces.

\begin{Definition}
	A \textbf{Fr\"olicher space} is a triple $(X, \mathcal{F}, \mathcal{C})$ where:
	\begin{itemize}
		\item $\mathcal{C}$ is a set of curves $c: \R \to X$, called \textbf{contours}.
		\item $\mathcal{F}$ is a set of functions $f: X \to \R$ such that $f \circ c \in C^\infty(\R, \R)$ for all $c \in \mathcal{C}$.
		\item Conversely, a curve $c: \R \to X$ belongs to $\mathcal{C}$ if and only if $f \circ c \in C^\infty(\R,\R)$ for all $f \in \mathcal{F}$.
	\end{itemize}
	A map $f: (X, \mathcal{F}, \mathcal{C}) \to (X', \mathcal{F}', \mathcal{C}')$ is \textbf{smooth} if and only if $f \circ c \in \mathcal{C}'$ and $f' \circ f \in \mathcal{F}'$ for all $f' \in \mathcal{F}'$, $c \in \mathcal{C}$.
\end{Definition}

This notion, introduced in \cite{FK}, was later formalized in \cite{KM}. For an early comparison between diffeological and Fr\"olicher settings, see \cite{Ma2006-3}; see also \cite{BKW2024, Ma2013, Ma2018-2, MR2016, Wa}. In particular, \cite{MR2016} explains that:

\begin{quote}
	\emph{Diffeological, Fr\"olicher, and G\^ateaux smoothness coincide on Fr\'echet spaces.}
\end{quote}

To construct a Fr\"olicher structure on a set $X$, one can proceed as follows: given a set of generating functions $\mathcal{F}_g \subset \mathrm{Map}(X, \R)$, define
\[
\mathcal{C} = \{ c: \R \to X \mid f \circ c \in C^\infty(\R, \R)\ \forall f \in \mathcal{F}_g \}, \quad
\mathcal{F} = \{ f: X \to \R \mid f \circ c \in C^\infty(\R, \R)\ \forall c \in \mathcal{C} \}.
\]
We say that $\mathcal{F}_g$ \textbf{generates} the Fr\"olicher structure $(X, \mathcal{F}, \mathcal{C})$.

This structure induces a natural topology on $X$, namely the pullback of the standard topology on $\R$ via the maps in $\mathcal{F}$.

Alternatively, one can begin with a set of generating contours $\mathcal{C}_g$, and define:
\[
\mathcal{F} = \{ f: X \to \R \mid f \circ c \in C^\infty(\R, \R)\ \forall c \in \mathcal{C}_g \}, \quad
\mathcal{C} = \{ c: \R \to X \mid f \circ c \in C^\infty(\R, \R)\ \forall f \in \mathcal{F} \}.
\]

If $X$ is a finite-dimensional smooth manifold, taking $\mathcal{F} = C^\infty(X, \R)$ and $\mathcal{C} = C^\infty(\R, X)$ yields a Fr\"olicher structure whose topology matches the standard manifold topology \cite{KM}.
Moreover, one can define a diffeology on a Fr\"olicher space by:
\begin{equation} \label{neb}
\mathcal{P}_\infty(\mathcal{F}) := \coprod_{p \in \mathbb{N}^*} \left\{ f: D(f) \subset \R^p \to X \ \middle| \ f \text{ is smooth and } \mathcal{F} \circ f \subset C^\infty(D(f), \R) \right\}.
\end{equation}
This is known as the \emph{nebulae diffeology} (cf. \cite{Igdiff}).

\begin{Proposition}[{\cite{Ma2006-3}}] \label{fd}
	Let $(X, \mathcal{F}, \mathcal{C})$ and $(X', \mathcal{F}', \mathcal{C}')$ be Fr\"olicher spaces. A map $f: X \to X'$ is smooth in the Fr\"olicher sense if and only if it is smooth for the associated diffeologies $\mathcal{P}_\infty(\mathcal{F})$ and $\mathcal{P}_\infty(\mathcal{F}')$.
\end{Proposition}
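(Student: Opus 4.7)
The plan is to unpack both notions of smoothness against the explicit description of the nebulae diffeology in \eqref{neb}, and show that each implication reduces to a one-line composition argument. Recall that membership $p \in \mathcal{P}_\infty(\mathcal{F})$ is characterized by the condition $\varphi \circ p \in C^\infty(D(p),\R)$ for every $\varphi \in \mathcal{F}$; symmetrically, membership in $\mathcal{C}$ is characterized by this condition restricted to $p$ of source $\R$. This symmetry is the engine of the proof.

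For the direct implication, I would assume that $f$ is Fr\"olicher smooth and pick an arbitrary plot $p : D(p) \subset \R^n \to X$ in $\mathcal{P}_\infty(\mathcal{F})$. To prove that $f \circ p \in \mathcal{P}_\infty(\mathcal{F}')$, I need to verify that $\varphi' \circ (f \circ p) \in C^\infty(D(p),\R)$ for every $\varphi' \in \mathcal{F}'$. Fr\"olicher smoothness of $f$ means precisely that $\varphi' \circ f \in \mathcal{F}$ for all such $\varphi'$, and by the defining property of plots of $\mathcal{P}_\infty(\mathcal{F})$, the composition $(\varphi' \circ f) \circ p$ is then smooth, which is exactly what is needed.

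For the converse, I would assume that $f : (X, \mathcal{P}_\infty(\mathcal{F})) \to (X', \mathcal{P}_\infty(\mathcal{F}'))$ is smooth between the associated diffeological spaces, and pick a contour $c \in \mathcal{C}$. The key observation is that $c$, as a map $\R \to X$ satisfying $\varphi \circ c \in C^\infty(\R,\R)$ for all $\varphi \in \mathcal{F}$, is itself a $1$-parametrization in $\mathcal{P}_\infty(\mathcal{F})$. By assumption, $f \circ c$ is then a plot of $\mathcal{P}_\infty(\mathcal{F}')$, so $\varphi' \circ f \circ c \in C^\infty(\R,\R)$ for every $\varphi' \in \mathcal{F}'$. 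By the Fr\"olicher duality between $\mathcal{C}'$ and $\mathcal{F}'$ in the definition of a Fr\"olicher space, this forces $f \circ c \in \mathcal{C}'$, hence $f$ is Fr\"olicher smooth.

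There is no real obstacle here: the statement is essentially a compatibility check between two definitions, and the only subtle point is remembering that the Fr\"olicher condition is most conveniently verified through the generating functions $\mathcal{F}'$ rather than through the contours $\mathcal{C}'$, so that both directions can be collapsed to the single composition identity $\varphi' \circ f \circ (\cdot)$. I would therefore keep the argument short and emphasize this symmetry, noting only in passing that one should check that smoothness of $c$ as a map $\R \to \R^0 \to X$ (i.e., in the $p = 1$ component of the coproduct \eqref{neb}) is indeed captured, so that generating contours really are plots of the nebulae diffeology.
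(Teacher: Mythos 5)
Your argument is correct, and in fact there is nothing in the paper to compare it against: Proposition \ref{fd} is stated with a citation to \cite{Ma2006-3} and no proof is reproduced, so your two composition arguments supply a proof rather than duplicating one. Both directions are sound: membership in $\mathcal{P}_\infty(\mathcal{F})$ is exactly the condition $\mathcal{F} \circ p \subset C^\infty(D(p),\R)$, the forward direction is associativity of composition once one knows $\varphi' \circ f \in \mathcal{F}$ for every $\varphi' \in \mathcal{F}'$, and the converse correctly uses that every contour is a $1$-plot of the nebulae diffeology. This is the standard argument and essentially the one in the cited reference.

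Three small points to tighten. First, the paper's definition of Fr\"olicher smoothness contains a typo ($f' \circ f \in \mathcal{F}'$ should read $f' \circ f \in \mathcal{F}$, since $f' \circ f$ is a function on $X$); you implicitly work with the corrected version, but you should state explicitly that the functional condition ($\varphi' \circ f \in \mathcal{F}$ for all $\varphi' \in \mathcal{F}'$) and the contour condition ($f \circ c \in \mathcal{C}'$ for all $c \in \mathcal{C}$) are equivalent via the saturation clauses of the Fr\"olicher definition: as written, your forward direction invokes the first form while your converse delivers only the second, so the two halves verify smoothness in two different (equivalent) senses. Second, to close the converse against the stated definition, add the one-line argument that $\varphi' \circ f \in \mathcal{F}$ follows for free: for any $c \in \mathcal{C}$ one has $\varphi' \circ (f \circ c) \in C^\infty(\R,\R)$ because $f \circ c \in \mathcal{C}'$, and $\mathcal{F}$ is by definition maximal with respect to $\mathcal{C}$. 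Third, your parenthetical about viewing $c$ as a map $\R \to \R^0 \to X$ is garbled; the correct observation is simply that $\R$ is an open subset of $\R^1$ and $\varphi \circ c \in C^\infty(\R,\R)$ for all $\varphi \in \mathcal{F}$, so $c$ lies in the $p=1$ component of the coproduct \eqref{neb}. None of these affects the validity of the argument.
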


Hence, we have the chain of implications:
\[
\text{smooth manifold} \Rightarrow \text{Fr\"olicher space} \Rightarrow \text{diffeological space}.
\]
These implications can be refined and are further analyzed in \cite{Wa, Ma2006-3}.

\begin{remark}
	The set of contours $\mathcal{C}$ of a Fr\"olicher space $(X, \mathcal{F}, \mathcal{C})$ does not define a diffeology by itself, because a diffeology requires closure under domain restriction. However, one can construct a minimal diffeology $\mathcal{P}_1(\mathcal{F})$ consisting of plots locally of the form $c \circ g$, with $c \in \mathcal{C}$ and $g$ a smooth map. Then, Proposition \ref{fd} remains valid if we replace $\mathcal{P}_\infty$ by $\mathcal{P}_1$. This is based on Boman's theorem \cite[p.~26]{KM}, and detailed in \cite{Ma2013, Ma2006-3, Wa}.
\end{remark}

\begin{Definition}[\cite{Wa}]
	A diffeology $\mathcal{P}_\infty$ defined by a set of functions $\mathcal{F}$ along the lines of {\rm (\ref{neb})} is called \textbf{reflexive}. Such diffeologies are those generated by the smooth functions of a Fr\"olicher structure.
\end{Definition}

We now present additional examples of diffeologies.

\begin{example}
	Let $k \in \mathbb{N}$ and let $(X, \mathcal{F}, \mathcal{C})$ be a Fr\"olicher space. The \textbf{$C^k$-diffeology} $\mathcal{P}_{(k)}$ is defined by:
	\[
	\forall d \in \mathbb{N}^*, \ \forall O \subset \R^d \text{ open,} \quad 
	\mathcal{P}_{(k)}(O) = \left\{ p: O \to X \mid \mathcal{F} \circ p \subset C^k(O, \R) \right\},
	\]
	and we set
	\[
	\mathcal{P}_{(k)} = \bigcup_{d \in \mathbb{N}^*} \bigcup_{\text{open } O \subset \R^d} \mathcal{P}_{(k)}(O).
	\]
	This construction generalizes the finite-dimensional case treated in \cite{Igdiff}.
\end{example}

\begin{Proposition}[\cite{Igdiff, Sou}] \label{prod1}
	Let $(X, \mathcal{P})$ and $(X', \mathcal{P}')$ be diffeological spaces. The \textbf{product diffeology} on $X \times X'$ is defined as the coarsest diffeology for which the projections are smooth. It consists of maps $g: O \to X \times X'$ that decompose as $g = (f, f')$ with $f \in \mathcal{P}$, $f' \in \mathcal{P}'$. This definition extends to arbitrary (even uncountable) products.
\end{Proposition}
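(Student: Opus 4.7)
The plan is to proceed in three small steps, mirroring the universal-property approach. First, I would introduce the candidate set
\[
\mathcal{Q} = \left\{ g : O \to X \times X' \ \middle|\ \pi \circ g \in \mathcal{P} \text{ and } \pi' \circ g \in \mathcal{P}' \right\},
\]
where $\pi, \pi'$ denote the canonical projections, and observe that an element of $\mathcal{Q}$ is precisely a parametrization of the form $g=(f,f')$ with $f \in \mathcal{P}$ and $f' \in \mathcal{P}'$ sharing the same domain. I would then directly verify the three diffeology axioms for $\mathcal{Q}$: constant parametrizations lie in $\mathcal{Q}$ because their components are constant hence in $\mathcal{P}$ and $\mathcal{P}'$; the sheaf/gluing axiom follows because if a family $(g_i)_{i\in I}=(f_i,f_i')_{i\in I}$ is compatible on overlaps, then so are $(f_i)$ and $(f_i')$ individually, and they glue componentwise to plots $f\in\mathcal{P}$ and $f'\in\mathcal{P}'$; and closure under smooth precomposition comes from $(f,f')\circ h=(f\circ h, f'\circ h)$ and the analogous property for $\mathcal{P}$, $\mathcal{P}'$.

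Second, I would check that the projections $\pi : X\times X'\to X$ and $\pi':X\times X'\to X'$ are smooth with respect to $\mathcal{Q}$: by the very definition of $\mathcal{Q}$, for any $g=(f,f')\in\mathcal{Q}$ we have $\pi\circ g = f\in\mathcal{P}$ and $\pi'\circ g = f'\in\mathcal{P}'$.

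Third, for the maximality clause, I would take any diffeology $\mathcal{Q}'$ on $X\times X'$ for which both projections are smooth, pick an arbitrary $g\in\mathcal{Q}'$, and write $g=(\pi\circ g,\pi'\circ g)$; by smoothness of the projections we have $\pi\circ g\in\mathcal{P}$ and $\pi'\circ g\in\mathcal{P}'$, hence $g\in\mathcal{Q}$. This yields $\mathcal{Q}'\subset\mathcal{Q}$, showing that $\mathcal{Q}$ is the coarsest (i.e., largest in plot content) diffeology with the required property. The extension to an arbitrary product $\prod_{i\in I}X_i$ is formally identical: one defines $\mathcal{Q}=\{g:O\to\prod_i X_i \mid \pi_i\circ g\in\mathcal{P}_i\text{ for all }i\in I\}$, and the same three steps apply verbatim, the cardinality of $I$ playing no role since the axioms are checked pointwise in the index.

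I do not foresee a serious obstacle: the argument is a standard universal-property verification and all axioms reduce to the corresponding axioms of the factor diffeologies. The only point that deserves care is the convention about \emph{coarseness}, which here means the diffeology with the \emph{largest} collection of plots making the projections smooth; once this is fixed, the inclusion $\mathcal{Q}'\subset\mathcal{Q}$ of the previous paragraph is exactly what must be proved.
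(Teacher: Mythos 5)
Your proof is correct and is exactly the standard verification found in the cited sources (the paper itself states Proposition~\ref{prod1} without proof, referring to \cite{Igdiff, Sou}): you check the three diffeology axioms componentwise, confirm smoothness of the projections, and establish maximality via the decomposition $g=(\pi\circ g,\pi'\circ g)$, with the extension to arbitrary products being verbatim. Your remark on the diffeological convention that \emph{coarsest} means the largest set of plots is also the right point to flag, since it reverses the intuition from the topological product.
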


\begin{Proposition} \label{prod2}
	Let $(X, \mathcal{F}, \mathcal{C})$ and $(X', \mathcal{F}', \mathcal{C}')$ be Fr\"olicher spaces, with associated diffeologies $\mathcal{P}$ and $\mathcal{P}'$. Then $X \times X'$ admits a natural Fr\"olicher structure whose set of contours is $\mathcal{C} \times \mathcal{C}'$, and whose associated diffeology is the product $\mathcal{P} \times \mathcal{P}'$.
\end{Proposition}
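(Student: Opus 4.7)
The plan is to construct the Fr\"olicher structure on $X \times X'$ explicitly from generating functions, identify its contours by direct inspection, and then verify the diffeological claim using Boman's theorem.

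First I would take the generating set
\[
\mathcal{F}_g := \{f \circ \pi_X : f \in \mathcal{F}\} \cup \{f' \circ \pi_{X'} : f' \in \mathcal{F}'\} \subset \mathrm{Map}(X \times X', \R),
\]
where $\pi_X, \pi_{X'}$ denote the projections, and apply the generating procedure described in the excerpt to produce a Fr\"olicher structure $(X \times X', \mathcal{F}_{X \times X'}, \mathcal{C}_{X \times X'})$. By construction $\mathcal{F}_g \subseteq \mathcal{F}_{X\times X'}$.

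Next I would identify the contours. A curve $\gamma = (\gamma_1, \gamma_2) : \R \to X \times X'$ lies in $\mathcal{C}_{X \times X'}$ iff $h \circ \gamma \in C^\infty(\R,\R)$ for every $h \in \mathcal{F}_g$. But the generators split by coordinate: the condition on $f \circ \pi_X \circ \gamma = f \circ \gamma_1$ for all $f \in \mathcal{F}$ is exactly $\gamma_1 \in \mathcal{C}$, and similarly the second factor forces $\gamma_2 \in \mathcal{C}'$. Hence $\mathcal{C}_{X \times X'}$ coincides with the set of coordinate-wise contours, which I denote $\mathcal{C} \times \mathcal{C}'$.

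For the diffeological claim, let $\mathcal{P}_{X \times X'} := \mathcal{P}_\infty(\mathcal{F}_{X \times X'})$. The inclusion $\mathcal{P}_{X \times X'} \subseteq \mathcal{P} \times \mathcal{P}'$ is immediate: if $p = (p_1, p_2) \in \mathcal{P}_{X \times X'}$ then $f \circ p_i$ is smooth for every $f \in \mathcal{F}$ (resp.\ $\mathcal{F}'$) since $f \circ \pi_X, f' \circ \pi_{X'} \in \mathcal{F}_{X\times X'}$, so $p_i$ lies in the respective nebulae diffeology, whence $p \in \mathcal{P} \times \mathcal{P}'$ by Proposition \ref{prod1}. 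The reverse inclusion is the substantial step: given $p = (p_1, p_2)$ with $p_i$ in the respective nebulae diffeology and $g \in \mathcal{F}_{X \times X'}$, I must show $g \circ p \in C^\infty(O,\R)$. For this I would invoke Boman's theorem: it suffices to check that $g \circ p \circ \sigma \in C^\infty(\R,\R)$ for every smooth $\sigma : \R \to O$. But $p_i \circ \sigma$ is then a curve in $X$ (resp.\ $X'$) along which every $f \in \mathcal{F}$ (resp.\ $\mathcal{F}'$) pulls back smoothly, hence $(p_1 \circ \sigma, p_2 \circ \sigma) \in \mathcal{C} \times \mathcal{C}' = \mathcal{C}_{X \times X'}$ by the previous step, and by definition of $\mathcal{F}_{X \times X'}$ the composition $g \circ (p_1 \circ \sigma, p_2 \circ \sigma)$ is smooth.

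The main obstacle is this last application of Boman's theorem: the contours of $X \times X'$ alone do not a priori test smoothness of functions against higher-dimensional plots, and one genuinely needs the classical Euclidean result to lift curvewise smoothness to smoothness on open subsets of $\R^d$. Everything else reduces to bookkeeping with the definition \eqref{neb} of the nebulae diffeology and the coordinate-wise structure of the generators.
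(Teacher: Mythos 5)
Your proof is correct and complete: the paper states this proposition without any proof (it is asserted as standard, following the generating-family machinery set up just before it), so there is no in-paper argument to diverge from. Your route --- generating the product structure from $\{f \circ \pi_X\} \cup \{f' \circ \pi_{X'}\}$, identifying the contours coordinate-wise via the saturation property of Fr\"olicher structures, and then using Boman's theorem to upgrade curvewise smoothness of $g \circ p$ to smoothness on $O \subset \R^d$ --- is exactly the standard argument the paper implicitly relies on, and it uses only ingredients the paper itself provides (the generating procedure, Proposition \ref{prod1}, and the remark invoking Boman's theorem from \cite[p.~26]{KM}). You also correctly isolate the one genuinely nontrivial step: the inclusion $\mathcal{P} \times \mathcal{P}' \subseteq \mathcal{P}_\infty(\mathcal{F}_{X \times X'})$ cannot be done by bookkeeping alone, since functions in $\mathcal{F}_{X \times X'}$ are only characterized by their behavior along contours, and Boman's theorem is what bridges curves to higher-dimensional plots.
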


This construction also extends to infinite products.

Let $(X, \mathcal{P})$ be a diffeological space and $X'$ a set. Given a map $f: X \to X'$, the \textbf{pushforward diffeology} $f_*(\mathcal{P})$ is the coarsest diffeology on $X'$ making $f$ smooth. Conversely, if $(X', \mathcal{P}')$ is diffeological and $f: X \to X'$, the \textbf{pullback diffeology} $f^*(\mathcal{P}')$ is the finest diffeology on $X$ for which $f$ is smooth.

\begin{Proposition}[\cite{Igdiff, Sou}] \label{quotient}
	Let $(X, \mathcal{P})$ be a diffeological space and $\sim$ an equivalence relation. Then the quotient $X/\sim$ inherits a diffeology $\mathcal{P}/\sim$ defined as the pushforward of $\mathcal{P}$ via the projection $X \to X/\sim$. Locally, plots are of the form $\pi \circ p$, with $p \in \mathcal{P}$.
\end{Proposition}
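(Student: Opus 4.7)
The plan is to first give an explicit candidate for the pushforward $\pi_*(\mathcal{P})$ in terms of local lifts through $\pi$, verify that this candidate is a diffeology, and then show it coincides with the coarsest diffeology making $\pi$ smooth. Concretely, I would define
\[
\mathcal{P}/\!\sim \; := \; \bigl\{ q:O\to X/\!\sim \;\bigm|\; \forall x\in O,\ \exists\, U\ni x\text{ open in }O,\ \exists\, p\in\mathcal{P}\text{ with }q|_U = \pi\circ p\bigr\},
\]
so that the local description claimed in the statement is built into the definition. The task then splits into three pieces: verification of the three diffeology axioms for this set, smoothness of $\pi$ for this diffeology, and a minimality argument.

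The axiom checks are routine but worth listing. For the covering axiom, any constant parametrization $c_{[x]}:\R^p\to X/\!\sim$ is globally of the form $\pi\circ c_x$, with $c_x$ constant in $\mathcal{P}$ by axiom (1) of $\mathcal{P}$. The sheaf/gluing axiom is immediate, because membership in $\mathcal{P}/\!\sim$ is already defined as a local condition: a parametrization admitting a covering by plots of $\mathcal{P}/\!\sim$ automatically admits, around each point, a local factorization through some $p\in\mathcal{P}$. For smooth reparametrization, if $q\in\mathcal{P}/\!\sim$ and $g\in C^\infty(O',O)$, then around any $y\in O'$ choose $U\ni g(y)$ and $p\in\mathcal{P}$ with $q|_U=\pi\circ p$; on the open set $g^{-1}(U)\ni y$ we have $(q\circ g)|_{g^{-1}(U)} = \pi\circ(p\circ g)$, and $p\circ g\in\mathcal{P}$ by axiom (3) applied to $\mathcal{P}$.

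It remains to identify this $\mathcal{P}/\!\sim$ with the pushforward, i.e.\ the coarsest diffeology on $X/\!\sim$ making $\pi$ smooth. That $\pi$ is smooth for $\mathcal{P}/\!\sim$ is tautological: for any $p\in\mathcal{P}$, the parametrization $\pi\circ p$ witnesses itself as a member of $\mathcal{P}/\!\sim$ with $U$ equal to the whole domain. Conversely, let $\mathcal{Q}$ be any diffeology on $X/\!\sim$ for which $\pi:(X,\mathcal{P})\to(X/\!\sim,\mathcal{Q})$ is smooth, and pick $q\in\mathcal{P}/\!\sim$ with domain $O$. Around each $x\in O$ there is $U\ni x$ and $p\in\mathcal{P}$ with $q|_U=\pi\circ p$, and this belongs to $\mathcal{Q}$ by smoothness of $\pi$; the gluing axiom for $\mathcal{Q}$ then forces $q\in\mathcal{Q}$, giving $\mathcal{P}/\!\sim\;\subset\mathcal{Q}$.

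The only conceptual subtlety, and the point I would stress in the write-up, is that the factorization $q=\pi\circ p$ is in general only local: sections of $\pi$ need not exist globally, so insisting on a global lift would produce too small a class and break the sheaf axiom. Working with local lifts is what keeps $\mathcal{P}/\!\sim$ simultaneously large enough to be a diffeology and small enough to be the coarsest one for which $\pi$ is smooth; this is the ``main obstacle,'' though here it is really just a point of care rather than a technical difficulty.
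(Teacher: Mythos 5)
Your proof is correct and is the standard argument for the quotient (pushforward) diffeology as in Iglesias-Zemmour; the paper itself states this proposition without proof, citing \cite{Igdiff, Sou}, and your local-lift definition is exactly the description ``locally, plots are of the form $\pi \circ p$'' that the statement builds in, with the axiom checks, smoothness of $\pi$, and minimality argument all sound (your use of surjectivity of $\pi$ to handle constant plots is the right touch, since for a non-surjective map the pushforward must separately admit locally constant plots). One terminological caveat: what you actually prove is that $\mathcal{P}/\!\sim$ is the \emph{smallest} diffeology by inclusion (fewest plots) making $\pi$ smooth, which in the standard convention (finer $=$ fewer plots) is called the \emph{finest} such diffeology, so the word ``coarsest'' --- inherited from the paper's own nonstandard phrasing --- should be read in that sense, since the literally coarsest diffeology making $\pi$ smooth would be the one consisting of all parametrizations.
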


Let $X_0 \subset X$ be a subset.

\begin{itemize}
	\item If $X$ is diffeological with $\mathcal{P}$, the \textbf{subset diffeology} on $X_0$ is the restriction $\mathcal{P}_0 = \{ p \in \mathcal{P} \mid \mathrm{Im}(p) \subset X_0 \}$.
	
	\item If $(X, \mathcal{F}, \mathcal{C})$ is Fr\"olicher, we define the induced structure on $X_0$ using the restrictions of maps in $\mathcal{F}$ and contours in $\mathcal{C}$ with image in $X_0$.
\end{itemize}

\begin{example}
	Let $X, Y$ be sets and consider the diffeological space $Y^X$ with the \textbf{functional product diffeology}, defined as the largest diffeology making all evaluation maps $\mathrm{ev}_x: Y^X \to Y$ smooth. Any subspace of $Y^X$ admitting this property inherits a subdiffeology of this functional product diffeology.
\end{example}

\begin{Proposition}[\cite{Igdiff}] \label{cvar}
	Let $X, Y, Z$ be diffeological spaces. Then the canonical identifications
	\[
	C^\infty(X \times Y, Z) \simeq C^\infty(X, C^\infty(Y, Z)) \simeq C^\infty(Y, C^\infty(X, Z))
	\]
	hold, where all function spaces are equipped with the functional diffeology.
\end{Proposition}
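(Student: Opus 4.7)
The plan is to build mutually inverse currying and uncurrying maps and verify their smoothness directly from the defining property of the functional diffeology. I would do the first equivalence $C^\infty(X\times Y,Z)\simeq C^\infty(X,C^\infty(Y,Z))$ in detail, then get the second by symmetry.

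The key preliminary step is to characterize the plots of $C^\infty(Y,Z)$ equipped with the functional diffeology. I would show that a parametrization $\varphi:O\to C^\infty(Y,Z)$ is a plot if and only if its adjoint $\tilde\varphi:O\times Y\to Z$, $(u,y)\mapsto\varphi(u)(y)$, is smooth for the product diffeology on $O\times Y$. The ``only if'' direction follows from smoothness of the evaluations $\mathrm{ev}_y$, applied plot by plot, combined with smoothness of the plot $\varphi$; the ``if'' direction follows from the maximality of the functional diffeology, because if $\tilde\varphi$ is smooth then $\mathrm{ev}_y\circ\varphi=\tilde\varphi(\cdot,y)$ is smooth for every $y$, so the diffeology generated by $\varphi$ still makes every evaluation smooth, and therefore $\varphi$ already lies in the functional one.

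With this lemma in hand, I define the currying map $\Phi: f\mapsto \hat f$ with $\hat f(x)(y):=f(x,y)$. First I check that $\hat f(x)\in C^\infty(Y,Z)$: it is $f$ precomposed with the smooth inclusion $y\mapsto(x,y)$, which is smooth by Proposition \ref{prod1}. Next I check that $\hat f:X\to C^\infty(Y,Z)$ is smooth: for any plot $p:O\to X$, the adjoint of $\hat f\circ p$ is $(u,y)\mapsto f(p(u),y)$, which is the composition of $f\in C^\infty(X\times Y,Z)$ with the plot $(p,\mathrm{id}_Y)$ of $X\times Y$, hence smooth; by the characterization, $\hat f\circ p$ is a plot. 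The inverse map $\Psi:g\mapsto\check g$ with $\check g(x,y):=g(x)(y)$ is built analogously: on a plot $(p,q)$ of $X\times Y$, $\check g\circ(p,q)(u)=g(p(u))(q(u))$ is obtained from the smooth adjoint of the plot $g\circ p$ of $C^\infty(Y,Z)$, composed with $(\mathrm{id},q)$ and the diagonal, hence smooth. That $\Phi$ and $\Psi$ are inverse to each other is immediate from the formulas.

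Finally, to get the stated identifications as diffeomorphisms (and not merely bijections) one applies the same currying argument one level up: I would show that $\Phi$ and $\Psi$ are themselves smooth when source and target carry their functional diffeologies, by rewriting a plot of $C^\infty(X\times Y,Z)$ or of $C^\infty(X,C^\infty(Y,Z))$ in terms of its iterated adjoint and invoking the lemma twice. The second identification $C^\infty(X,C^\infty(Y,Z))\simeq C^\infty(Y,C^\infty(X,Z))$ then follows by applying the first equivalence twice together with the obvious swap $X\times Y\simeq Y\times X$. The main obstacle I expect is the bookkeeping in the adjunction lemma: everything hinges on the equivalence between smoothness of $\varphi$ into $C^\infty(Y,Z)$ and smoothness of $\tilde\varphi$ on $O\times Y$, and this has to be set up cleanly before the rest becomes routine.
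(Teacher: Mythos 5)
Note first that the paper contains no proof of Proposition \ref{cvar}: it is quoted from \cite{Igdiff}, where the standard argument is exactly the currying/uncurrying scheme you outline. Your scaffolding (adjunction lemma characterizing plots by their adjoints, the maps $\Phi$ and $\Psi$, smoothness ``one level up'', and symmetry for the second identification) is therefore the right route, and everything after the lemma is routine, as you say.

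However, your justification of the key lemma has a genuine flaw, precisely at the point where two inequivalent definitions of the functional diffeology part ways. Smoothness of the pointwise evaluations $\mathrm{ev}_y \circ \varphi$ for every fixed $y \in Y$ gives only slice-by-slice smoothness of $\tilde\varphi(\cdot,y)$; it does \emph{not} give smoothness of $\tilde\varphi$ on $O \times Y$, so your ``only if'' direction fails as argued. Concretely, setting $\tilde\varphi(u,y) = uy/(u^2+y^2)$ (extended by $0$ at the origin) defines $\varphi: \R \to C^\infty(\R,\R)$ for which every $\mathrm{ev}_y \circ \varphi$ is smooth while $\tilde\varphi$ is not even continuous at $(0,0)$. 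This matters here because the paper's own Example defines the functional product diffeology on $Y^X$ as the largest diffeology making all the maps $\mathrm{ev}_x$ smooth; for \emph{that} diffeology your lemma, and indeed the exponential law itself, are false (a map into $C^\infty(Y,Z)$ would only need to be separately smooth). The functional diffeology of \cite{Igdiff} is instead the largest diffeology making the single joint evaluation $\mathrm{ev}: C^\infty(Y,Z) \times Y \to Z$ smooth, and with that definition both directions go through: ``only if'' because $\tilde\varphi = \mathrm{ev} \circ (\varphi \times \mathrm{id}_Y)$ and, for any plot $(h,q)$ of $O \times Y$, the pair $(\varphi \circ h, q)$ is a plot of the product; ``if'' because the diffeology generated by adjoining $\varphi$ keeps the joint evaluation smooth, since a plot $(\psi,q)$ with $\psi$ locally factoring as $\varphi \circ h$ evaluates to $\tilde\varphi(h(\cdot),q(\cdot))$, which is smooth when $\tilde\varphi$ is. Your maximality argument for the ``if'' direction likewise only verifies the pointwise evaluations and must be upgraded in the same way. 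Once the lemma is stated and proved with the joint evaluation map, the remainder of your proposal is correct and coincides with the proof in \cite{Igdiff}.
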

\subsection{Groups, vector spaces, algebra and fiber bundles in diffeologies}

Given an algebraic structure, one can naturally define a compatible diffeological or Fr\"olicher structure. For instance, see \cite{Les, Ma2013}, and more specifically \cite[pp.~66--68]{Igdiff} for the case of vector spaces. 

If $\R$ is equipped with its standard diffeology (or Fr\"olicher structure), then an $\R$-vector space $V$ equipped with a diffeology (or Fr\"olicher structure) is called a \textbf{diffeological (resp. Fr\"olicher) vector space} if vector addition and scalar multiplication are smooth maps.

\begin{Definition}
	Let $G$ be a group endowed with a diffeology (resp. a Fr\"olicher structure). Then $G$ is called a \textbf{diffeological (resp. Fr\"olicher) group} if both the multiplication map $G \times G \to G$ and the inversion map $G \to G$ are smooth.
\end{Definition}

This notion applies similarly to rings, modules, algebras, and other algebraic structures where compatibility with the diffeological or Fr\"olicher framework requires the corresponding algebraic operations to be smooth.

Moreover, one can define fiber bundles in the diffeological context. A \textbf{diffeological fiber bundle} is a surjective smooth map $\pi: E \to B$ between diffeological spaces such that locally (in the diffeological sense), $E$ is diffeomorphic to $B \times F$, with $F$ the typical fiber, and the transition functions are smooth in the functional diffeology sense.

Many classical constructions—direct sums, duals, subspaces—have analogs in the diffeological category, but must be handled carefully since the lack of local triviality (in the usual topological sense) may affect their behavior. In particular, when dealing wit tangent and cotangent spaces often belong to extended classes called pseudo-bundles (see e.g. \cite{Ma2025-1}), namely, 
\begin{itemize} 
\item the cotangent space of a diffeological space is defined in a widely accepted way in the literature; but it appears that there is no typical fiber. The \minor{fibers} are vector spaces, but they can be no-isomorphic. For example, the cross has a cotangent bundle with fibers of dimension 1 or 2. This is called a vector pseudo bundle
\item the notion of tangent bundle is even more difficult to study because many non-equivalent versions are actually proposed, and each of them has its own motivations. 
\end{itemize}
For an overview on tangent spaces, cotangent spaces, vector pseudo bundles, we refer to \cite{GMW2023}.

\section{Linearizations from Finite-Dimensional Manifolds to Diffeological Spaces and Infinite-Dimensional Settings}

\subsection{Linearizations of a Manifold}

Bokobza-Haggiag introduces the notion of \emph{linearization} in \cite{BK}, aiming to extend local analytical constructions—such as differential operators—into a global structure on a smooth manifold $M$. Let us first recall the classical concept:

\begin{Definition}
	A \emph{local linearization} at a point $x \in M$ is a diffeomorphism
	\[
	\psi_x : U \subset M \longrightarrow V \subset \mathbb{R}^n,
	\]
	such that $\psi_x(x) = 0$ and $d\psi_x|_x : T_xM \to \mathbb{R}^n$ is the identity (under a fixed identification).
\end{Definition}

The paper \cite{BK} generalizes this to a global notion:

\begin{Definition}[\cite{BK}] \label{lin-I}
	A (global) \emph{linearization} is a smooth map $\nu: X \times X \rightarrow TX$ such that:
	\begin{enumerate}
		\item Let $p_1: X \times X \rightarrow X$ denote the projection onto the first component, and let $\pi: TX \rightarrow X$ denote the tangent bundle projection. Then the following diagram commutes:
		\[
		\begin{tikzcd}
			X \times X \arrow[rr, "\nu"] \arrow[dr, "p_1"'] & & TX \arrow[dl, "\pi"] \\
			& X
		\end{tikzcd}
		\]
		
		\item For all $x \in X$, one has $\nu(x,x) = 0$.
		
		\item For all $x \in X$, the differential $D\nu(x, \cdot)|_{y=x}: T_xX \to T_0(T_xX) \simeq T_xX$ is the identity.
	\end{enumerate}
\end{Definition}

Using classical geometric techniques, one can derive the following properties:

\begin{enumerate}
	\item By the inverse function theorem, a linearization defines a local linearization at each point $x \in X$.
	
	\item There exists a neighborhood $\Omega$ of the diagonal $\Delta(X) \subset X \times X$ such that $\nu|_\Omega$ is a diffeomorphism onto a neighborhood of the zero section of $TX$.
	
	\item The map
	\[
	l_x^y : \xi \in T^*X \mapsto d_y \langle \nu(x, y), \xi \rangle
	\]
	is a smooth linear map from $T^*_y X$ to $T^*_x X$.
	
	\item Every manifold $X$ admits a linearization.
	
	\item A manifold $X$ admits an atlas with affine transition functions if and only if there exists a linearization $\mu$ of $X$ such that, for all $(x,y,z) \in X^3$ with $(x,y), (y,z), (x,z) \in \Omega$, one has
	\[
	l_x^z = l_y^z \circ l_x^y.
	\]
\end{enumerate}

We present an equivalent formulation of Definition~\ref{lin-I} that will be helpful for further generalization.

\begin{Proposition}
	The following condition is equivalent to Definition~\ref{lin-I}.
	
	Define $k : T^*X \times X \to \R$ by
	\[
	k(\xi, y) := \langle \nu(x, y), \xi \rangle, \quad \text{with } x = \pi(\xi).
	\]
	Then $\nu$ is a global linearization if and only if:
	\begin{enumerate}
		\item $k(\xi, x) = 0$ for all $\xi \in T^*X$ (where $x = \pi(\xi)$);
		
		\item The differential $d_y k(\xi, y)|_{y = x} : T^*_x X \to T^*_x X$ is the identity.
	\end{enumerate}
\end{Proposition}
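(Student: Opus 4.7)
The plan is to prove the equivalence by directly translating each condition of Definition~\ref{lin-I} into a condition on $k$, using only the non-degeneracy of the canonical pairing $T_x X \times T^*_x X \to \R$ and the identification $T_0(T_x X) \simeq T_x X$ of a vector space with its tangent space at zero.

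First I would observe that condition (1) of Definition~\ref{lin-I} (i.e.\ $\pi \circ \nu = p_1$) is built into the very formula $k(\xi,y) = \langle \nu(x,y), \xi\rangle$ with $x = \pi(\xi)$: the pairing is only defined when $\nu(x,y) \in T_x X$, so $\nu$ must send $(x,y)$ into $T_x X$. Thus the two sets of conditions share this background assumption, and the equivalence reduces to matching (2) with item~(1) of the proposition and (3) with item~(2).

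For the vanishing, $\nu(x,x)=0$ gives $k(\xi,x)=\langle 0,\xi\rangle = 0$ immediately; conversely, if $k(\xi,x)=0$ for all $\xi \in T^*_x X$, non-degeneracy forces $\nu(x,x)=0$. For the differential, I would evaluate $d_y k(\xi,y)|_{y=x}$ on a tangent vector $v \in T_x X$ by picking a smooth curve $\gamma$ with $\gamma(0)=x$, $\gamma'(0)=v$. Since $y \mapsto \nu(x,y)$ takes values in the fixed vector space $T_x X$, its derivative at $y=x$ lies in $T_0(T_x X) \simeq T_x X$ and equals $D_y\nu(x,\cdot)|_{y=x}(v)$; linearity of the pairing in the first slot then yields
\[
d_y k(\xi,y)|_{y=x}(v) = \xi\bigl(D_y\nu(x,\cdot)|_{y=x}(v)\bigr).
\]
Hence the linear endomorphism $\xi \mapsto d_y k(\xi,y)|_{y=x}$ of $T^*_x X$ is the transpose of $D_y\nu(x,\cdot)|_{y=x} \in \mathrm{End}(T_x X)$. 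By non-degeneracy of the pairing, the former is the identity of $T^*_x X$ if and only if the latter is the identity of $T_x X$, which is condition (3) of Definition~\ref{lin-I}.

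The only subtlety is bookkeeping around the identification $T_0(T_x X) \simeq T_x X$: this identification is canonical precisely because $T_x X$ is a vector space, but its use in condition (3) tacitly requires that $\nu(x,x)=0$, so (2) must be invoked before (3) can be cleanly compared with the $k$-formulation. Once this is acknowledged, all remaining steps are linear-algebraic and no further obstacle arises.
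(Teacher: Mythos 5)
Your proposal is correct, and in fact it supplies an argument the paper never gives: the Proposition is stated there without any proof, so there is nothing on the author's side to compare against. Your route is the natural one and closes that gap cleanly: reading $k(\xi,\cdot)$ as $\xi \circ \nu(x,\cdot)$ with $\nu(x,\cdot)$ valued in the \emph{fixed} vector space $T_xX$ (which is exactly where condition (1) of Definition~\ref{lin-I} enters, and you rightly point out that it is presupposed by the very well-definedness of $k$), so that $\xi \mapsto d_y k(\xi,y)|_{y=x}$ is the transpose of $D_y\nu(x,\cdot)|_{y=x} \in \mathrm{End}(T_xX)$, and then invoking non-degeneracy of the canonical pairing twice --- once to get $\nu(x,x)=0 \iff k(\xi,x)=0$ for all $\xi \in T^*_xX$, and once for the standard fact that a linear endomorphism is the identity if and only if its transpose is (since $\xi(Av-v)=0$ for all $\xi$ forces $Av=v$).

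One small quibble with your closing caveat: the identification $T_wV \simeq V$ is canonical at \emph{every} point $w$ of a vector space $V$, not only at $w=0$, so the comparison of differentials does not in itself require $\nu(x,x)=0$ to be established first. What does require it is the notation $T_0(T_xX)$ appearing in Definition~\ref{lin-I}, and in the converse direction you in any case derive $\nu(x,x)=0$ from item (1) of the Proposition before treating item (2), so your ordering is sound --- the dependence is notational rather than logical. With that understood, the proof is complete and correct.
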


\subsection{Exponential Map and Linearization on a Compact Riemannian Manifold}

Let $(M, g)$ be a compact, connected, orientable, boundaryless Riemannian manifold. By the Hopf–Rinow theorem, $M$ is geodesically complete, so the exponential map
\[
\exp_p : T_p M \to M
\]
is defined on the entire tangent space $T_p M$ for every $p \in M$.

\subsubsection{Properties of the exponential map}

We follow classical references such as \cite{Ber2003, GHL2004}. Let $v \in T_p M$ and consider the geodesic $\gamma(t) = \exp_p(tv)$. Then $d\exp_p|_v$ satisfies:

\begin{itemize}
	\item \textbf{Identity at the origin:}
	\[
	d\exp_p|_{0} = \mathrm{id}_{T_p M}.
	\]
	
	\item \textbf{Critical and regular points:} A vector $v \in T_p M$ is a regular point if $d\exp_p|_v$ is invertible. Otherwise, $v$ is critical, and $\exp_p(v)$ is a conjugate point along the geodesic.
	
	\item \textbf{Local diffeomorphism:} There exists $\varepsilon > 0$ such that $\exp_p$ is a diffeomorphism from $B(0,\varepsilon) \subset T_p M$ onto its image.
	
	\item \textbf{Injectivity radius:} The injectivity radius at $p$ is the supremum of $r > 0$ such that $\exp_p$ is a diffeomorphism on $B(0,r)$. For compact $M$, this radius is uniform and strictly positive.
	
	\item \textbf{Uniform boundedness:} If the Riemann curvature is uniformly bounded, then the norm of $d\exp_p$ can be estimated via comparison theorems (e.g., Rauch’s). There exists a uniform radius $r > 0$ such that $\exp_p$ is a diffeomorphism on $B(0, r) \subset T_p M$.
\end{itemize}

\subsubsection{The Exponential Map Defines a Linearization in the Sense of Bokobza-Haggiag} \label{ss:man}

Let $M$ be as above. Since the injectivity radius $r > 0$ is uniform, the exponential map defines a diffeomorphism:
\[
\exp: B_r = \left\{ v \in TM \ \middle| \ \langle v, v \rangle < r^2 \right\} \to V_r \subset M \times M,
\]
where $V_r$ is a neighborhood of the diagonal $\Delta(M)$ in $M \times M$.

Define $\delta: TM \to V_r$ by
\[
\delta(v) := \left(x, \exp_x \left( \frac{r}{\sqrt{1 + \langle v,v \rangle}} v \right)\right),
\]
where $v \in T_x M$.

Then:
\begin{itemize}
	\item $\delta \in C^\infty(TM, M \times M)$;
	
	\item For any linearization $\nu$ of $M$ in the sense of Bokobza-Haggiag, we have:
	\[
	D_0\left(\left(\frac{1}{r}\nu\right) \circ \delta\right) = \mathrm{Id}_{T_x X} = D_x\left(\delta \circ \left(\frac{1}{r}\nu\right)\right),
	\]
    where $D_0$ is the derivative along the fibers of $T_X$ and $D_x$ is the derivative along the second variable of $X \times X.$
\end{itemize}

Using the Riemannian metric, we identify $TM \cong T^*M$. Hence, both tangent and cotangent bundles may be used interchangeably in the characterization of linearizations.

These properties will serve as a reference point when generalizing linearizations to diffeological spaces.
\section{Extending Classical Optimization Methods in Linearizations}
\subsection{A Generalized Notion of Linearization}

\begin{Definition} \label{d:linearisation}
	Let $(X,\p)$ be a diffeological space. Let $E$ be a diffeological vector pseudo-bundle over $X$ with projection $\pi_E$, not necessarily equipped with local trivializations or smooth partitions of unity, but such that the inclusion map $T^*X \subset E$ is \minor{a morphism} of diffeological pseudo-bundles. A \emph{linearization} of $X$ is a pair $(\nu,\delta)$ of smooth maps {\color{black} \[
  \delta : E \longrightarrow X \times X,
  \qquad
  \nu : X \times X \longrightarrow E,
\]
such that the following properties hold:}
	
	\begin{enumerate}
		\item[(1)] {\color{black} Denoting by $\Delta(X)$ the diagonal of $X\times X,$} $\delta^{-1}_{|T^*X}(\Delta(X))$ is the zero section of $T^*X$, {\color{black} i.e. $\forall \xi \in T^*X \subset E, \delta(\xi) \in \Delta(X) \Leftrightarrow \xi = 0.$}
        \item[(2)] the diagram
		\begin{center}
			\begin{tikzcd}
				E \arrow[rr,"\delta"] \arrow[dr, "\pi_E"'] & & X \times X \arrow[dl, "p_1"] \\
				& X  
			\end{tikzcd}
		\end{center}
		commutes, {\color{black} i.e. the base point is preserved.}
		\item[(3)] $\nu : X \times X \rightarrow E$ is smooth,
		\item[(4)] \label{i:x=y} $\forall (x,y) \in X^2,$ we have $\nu(x,y)=0 \Leftrightarrow x=y$,
		\item[(5)] $\forall x \in X,$ the derivative $d_x\nu(x,.) \neq 0$, {\color{black} where $d_x$ is the de Rham differential, here applied to the map $(\nu(x,.): y \mapsto \nu(x,y)) \in C^\infty(X,E_x). $ Therefore, $d_x\nu(x,.) \in \Omega^1(X,E_x).$}
		\item[(6)] The diagram
		\begin{center}
			\begin{tikzcd}
				X \times X \arrow[rr,"\nu"] \arrow[dr, "p_1"'] & & E \arrow[dl, "\pi_E"] \\
				& X  
			\end{tikzcd}
		\end{center}
		commutes,
		\item[(7)] For every $x \in X$, the induced map from $T^*X$ to $T^*X$ by $(\delta \circ \nu)(x,.) \in C^\infty(X,X)$ restricts to an injective map on $T^*_xX$.
	\end{enumerate}
    {\color{black} We denote such a structure by the pair \((\delta,\nu)\) and say that \( (E,\delta,\nu) \) is a linearization of \(X\).}
\end{Definition}

This definition mirrors the notation in Section \ref{ss:man} to highlight differences with the classical definition by Bokobza-Haggiag. Notably, condition (4) does not generally hold in the classical case when $\nu$ targets $TX$ or $T^*X$. The non-trivial homotopy type of $X$ is a main obstruction such a global property.

This technical obstacle is circumvented by enlarging the target space to a pseudo-bundle $E$ over $X$ containing $T^*X$ as a sub-pseudo-bundle. The main motivation is that $T^*X$ is well-defined in the diffeological setting, unlike the tangent bundle, for which several constructions exist. We now verify the existence of such linearizations in standard cases, {\color{black}namely, when $X$ is a (classical) smooth manifold}:

{\color{black}

\begin{Proposition}[Smooth linearization on a differentiable manifold]
\label{prop:linearization-smooth}
Let $(X,g)$ be a smooth finite-dimensional Riemannian manifold.
Using the musical isomorphism $v \mapsto v^\flat := g_x(v,\cdot)$,
we identify $T_xX$ with $T_x^\ast X$.
Set
\[
E := T^\ast X \oplus \mathbb{R},
\]
viewed as a smooth vector bundle over $X$ via the projection 
$\pi_1 : E \to X$.

Then there exists a smooth map
\[
\nu : X\times X \longrightarrow E
\]
such that:
\begin{enumerate}
  \item[(i)] $\nu(x,x) = (0_x,0)$ for all $x\in X$;

  \item[(ii)] $\nu(x,y) = (0_x,0)$ if and only if $y=x$;

  \item[(iii)] for each $x\in X$, the differential of the first component
  of $\nu$ with respect to the second variable, evaluated at $(x,x)$, is
  the musical isomorphism
  \[
    D_2\bigl(pr_1\circ\nu\bigr)(x,x)
    = (\cdot)^\flat : T_xX \longrightarrow T_x^\ast X ,
  \]
  where $pr_1 : E\to T^\ast X$ denotes the first projection.
\end{enumerate}
In particular, $(E,\nu)$ defines a smooth linearization of $X$ along the
diagonal in the above sense.
\end{Proposition}

\begin{proof}
\emph{Step 1: A Bokobza--Haggiag type linear part in $T^\ast X$.}

Fix a smooth Riemannian metric $g$ on $X$.  
For each $x\in X$, let $\exp_x : \mathcal U_x \subset T_xX \to X$ be the 
exponential map, defined on some neighbourhood $\mathcal U_x$ of $0_x$.
By the tubular neighbourhood theorem for the diagonal
\[
\Delta := \{(x,x) : x\in X\} \subset X\times X,
\]
there exists an open neighbourhood $U$ of $\Delta$ in $X\times X$ and
a smooth map
\[
\xi : U \longrightarrow TX
\]
such that:
\begin{enumerate}
  \item $\xi(x,y)\in T_xX$ for all $(x,y)\in U$;
  \item $\xi(x,x) = 0_x$ for all $x\in X$;
  \item $y = \exp_x\bigl(\xi(x,y)\bigr)$ for all $(x,y)\in U$.
\end{enumerate}
In particular,
\[
D_2\xi(x,x) = \mathrm{Id}_{T_xX}.
\]

Define
\[
\nu_{\mathrm{BH}} : U \longrightarrow T^\ast X,
\qquad
\nu_{\mathrm{BH}}(x,y) := \xi(x,y)^\flat.
\]
Then $\nu_{\mathrm{BH}}$ is smooth on $U$, satisfies
$\nu_{\mathrm{BH}}(x,x)=0_x$, and
\[
D_2\nu_{\mathrm{BH}}(x,x)
= (\cdot)^\flat : T_xX \to T_x^\ast X.
\]

\medskip
\noindent
\emph{Step 2: A smooth function vanishing exactly on the diagonal.}

Define on $U$:
\[
\rho_0(x,y) := \|\xi(x,y)\|_g^2.
\]
Then $\rho_0$ is smooth on $U$, nonnegative, and
\[
\rho_0(x,y) = 0 \Longleftrightarrow (x,y)\in \Delta \cap U.
\]

Choose an open subset $U_0$ such that
\[
\Delta \subset U_0 \subset \overline{U_0} \subset U,
\]
and a smooth cutoff function 
$\chi\in C^\infty(X\times X,[0,1])$ with
\[
\chi \equiv 1 \ \text{on } U_0,
\qquad
\mathrm{supp}(\chi)\subset U.
\]
Define a global smooth map
\[
\rho : X\times X \longrightarrow [0,+\infty)
\]
by
\[
\rho(x,y)
:= \chi(x,y)\,\rho_0(x,y) + \bigl(1-\chi(x,y)\bigr).
\]
Then:
\begin{itemize}
  \item on $U_0$, $\rho=\rho_0$, hence $\rho(x,y)=0$ iff $(x,y)\in\Delta$;
  \item outside $U$, $\chi=0$ and thus $\rho\equiv 1$.
\end{itemize}
Hence $\rho$ is smooth and
\[
\rho(x,y)=0 \Longleftrightarrow (x,y)\in\Delta.
\]

\medskip
\noindent
\emph{Step 3: Global extension of $\nu_{\mathrm{BH}}$.}

Define
\[
\widetilde{\nu}_{\mathrm{BH}}(x,y)
:= \chi(x,y)\,\nu_{\mathrm{BH}}(x,y),
\]
extending $\nu_{\mathrm{BH}}$ smoothly across $U$ and taking it to $0$ outside $U$.
Thus $\widetilde{\nu}_{\mathrm{BH}}$ is smooth on $X\times X$, and coincides
with $\nu_{\mathrm{BH}}$ on $U_0$.

\medskip
\noindent
\emph{Step 4: Definition of the linearization $\nu$.}

Set
\[
\nu(x,y)
:= \bigl(\widetilde{\nu}_{\mathrm{BH}}(x,y),\;\rho(x,y)\bigr)
\in T_x^\ast X \oplus \mathbb{R}.
\]

\medskip
\noindent
\emph{Step 5: Verification of properties (i)--(iii).}

(i)  
For $y=x$, we have $(x,x)\in U_0$.
Hence $\chi(x,x)=1$, $\rho(x,x)=\|\xi(x,x)\|^2=0$, and
$\widetilde{\nu}_{\mathrm{BH}}(x,x)=\nu_{\mathrm{BH}}(x,x)=0_x$.
Thus
\[
\nu(x,x)=(0_x,0).
\]

(ii)  
If $\nu(x,y)=(0_x,0)$, then in particular $\rho(x,y)=0$.
By construction of $\rho$, this implies $(x,y)\in\Delta$, hence $y=x$.
The converse is already proved in (i).
Thus
\[
\nu(x,y)=(0_x,0)\quad\Longleftrightarrow\quad y=x.
\]

(iii)  
Near $(x,x)$, we are in $U_0$, so
\[
pr_1\circ \nu = \widetilde{\nu}_{\mathrm{BH}}
                = \nu_{\mathrm{BH}}.
\]
Therefore
\[
D_2(pr_1\circ \nu)(x,x)
= D_2\nu_{\mathrm{BH}}(x,x)
= (\cdot)^\flat : T_xX \to T_x^\ast X,
\]
which is the required linear isomorphism.

\medskip
This completes the proof.
\end{proof}
\begin{Proposition}[Construction of $\delta$ compatible with $\nu$]
\label{prop:delta-construction}
Let $(X,g)$ be a smooth finite-dimensional Riemannian manifold, and let
\[
E := T^\ast X \oplus \mathbb{R}
\]
as in Proposition~\ref{prop:linearization-smooth}.
Let $\nu : X\times X \to E$ be the smooth map constructed there, i.e.
\[
\nu(x,y) = \bigl(\widetilde{\nu}_{\mathrm{BH}}(x,y),\,\rho(x,y)\bigr),
\]
where $\widetilde{\nu}_{\mathrm{BH}}$ extends the Bokobza--Haggiag-type
linearization and $\rho$ is a smooth function vanishing exactly on the
diagonal $\Delta\subset X\times X$.

Then there exists a smooth map
\[
\delta : E \longrightarrow X\times X
\]
such that:
\begin{enumerate}
  \item[(i)] $\delta(x,0_x,s) = (x,x)$ for all $x\in X$ and $s\in\mathbb{R}$;

  \item[(ii)] there exists an open neighbourhood $U_0$ of the diagonal
  in $X\times X$ such that, for all $(x,y)\in U_0$, writing
  \[
    \nu(x,y) = \bigl(\alpha_{x,y},\, \rho(x,y)\bigr)
    \in T_x^\ast X\oplus\mathbb{R},
  \]
  one has
  \[
    \delta\bigl(x,\alpha_{x,y},\rho(x,y)\bigr) = (x,y);
  \]

  \item[(iii)] for each $x\in X$, the vertical differential of $\delta$
  at $(x,0_x,0)$, restricted to the $T_x^\ast X$-component, induces the
  musical isomorphism
  \[
    D^{\mathrm{vert}}_\alpha\delta(x,0_x,0)
    : T_x^\ast X \xrightarrow{\ \sim\ } \{0\}\oplus T_xX
  \]
  via the identification $T_{(x,x)}(X\times X) \simeq T_xX\oplus T_xX$.
\end{enumerate}
Consequently, $(E,\nu,\delta)$ defines a smooth linearization of $X$ in the
sense that, near the diagonal, $\nu$ and $\delta$ are local inverses in the
second variable and encode the tangent structure via $g$.
\end{Proposition}

\begin{proof}
We use the same geometric data as in
Proposition~\ref{prop:linearization-smooth}. In particular, we have:
\begin{itemize}
  \item an open neighbourhood $U\subset X\times X$ of the diagonal
  $\Delta$;
  \item a smooth map $\xi:U\to TX$ such that
  $\xi(x,y)\in T_xX$, $\xi(x,x)=0_x$, and $y=\exp_x(\xi(x,y))$;
  \item an open subset $U_0$ with 
  $\Delta\subset U_0\subset\overline{U_0}\subset U$;
  \item a cutoff function $\chi\in C^\infty(X\times X,[0,1])$ with
  $\chi\equiv 1$ on $U_0$ and $\mathrm{supp}(\chi)\subset U$;
  \item the function $\rho(x,y)=\chi(x,y)\,\|\xi(x,y)\|_g^2+(1-\chi(x,y))$;
  \item the extended Bokobza--Haggiag map 
  $\widetilde{\nu}_{\mathrm{BH}}(x,y)=\chi(x,y)\,\xi(x,y)^\flat$.
\end{itemize}

\medskip
\noindent\textbf{Step 1: Controlling the exponential domain.}

For each $x\in X$, choose $r(x)>0$ such that the open ball
\[
B_x := \{v\in T_xX : \|v\|_g < r(x)\}
\]
is contained in the domain of $\exp_x$ and $\exp_x:B_x\to \exp_x(B_x)$
is a diffeomorphism onto its image. By standard arguments (using a partition
of unity), we may assume that $x\mapsto r(x)$ is a smooth positive function.

Let $\psi:[0,+\infty)\to[0,1]$ be a smooth function such that
\[
\psi(t) = 1 \quad\text{for } t\le 1,
\qquad
\psi(t) = 0 \quad\text{for } t\ge 2.
\]

\medskip
\noindent\textbf{Step 2: Definition of $\delta$.}

For $(x,\alpha,s)\in E_x = T_x^\ast X\oplus\mathbb{R}$, set
\[
v := \alpha^\sharp \in T_xX,
\qquad
\lambda(x,\alpha) := \psi\!\left(\frac{\|v\|_g}{r(x)}\right),
\qquad
v'(x,\alpha) := \lambda(x,\alpha)\,v.
\]
Then $v'(x,\alpha)$ is smooth in $(x,\alpha)$, and:
\begin{itemize}
  \item if $\|v\|_g \le r(x)$, then $\lambda(x,\alpha)=1$ and $v'=v$;
  \item if $\|v\|_g \ge 2\,r(x)$, then $\lambda(x,\alpha)=0$ and $v'=0$.
\end{itemize}

We now define
\[
\delta : E \longrightarrow X\times X,\qquad
\delta(x,\alpha,s) := \bigl(x,\exp_x(v'(x,\alpha))\bigr).
\]
Since $\exp_x$ is smooth in both $x$ and $v$, and $v'$ is smooth,
$\delta$ is a smooth map.

\medskip
\noindent\textbf{Step 3: Verification of (i).}

If $\alpha=0_x$, then $v=0_x$ and hence $v'(x,0_x)=0_x$ for all $x$.
Therefore
\[
\delta(x,0_x,s) = (x,\exp_x(0_x)) = (x,x),
\]
for all $s\in\mathbb{R}$.
This proves (i).

\medskip
\noindent\textbf{Step 4: Compatibility with $\nu$ near the diagonal.}

Recall that, on $U_0$, we have 
$\chi\equiv 1$, hence
\[
\nu(x,y) = \bigl(\nu_{\mathrm{BH}}(x,y),\rho(x,y)\bigr)
         = \bigl(\xi(x,y)^\flat, \|\xi(x,y)\|_g^2\bigr).
\]
Fix $(x,y)\in U_0$ and set
\[
\alpha_{x,y} := \xi(x,y)^\flat.
\]
Then
\[
v := \alpha_{x,y}^\sharp = \xi(x,y),
\qquad
\|v\|_g = \|\xi(x,y)\|_g.
\]
For $(x,y)$ sufficiently close to the diagonal, we may assume that
$\|v\|_g < r(x)$, so that $\lambda(x,\alpha_{x,y})=1$ and 
$v'(x,\alpha_{x,y}) = v$.
Hence
\[
\delta\bigl(x,\alpha_{x,y},\rho(x,y)\bigr)
= \bigl(x,\exp_x(v'(x,\alpha_{x,y}))\bigr)
= \bigl(x,\exp_x(\xi(x,y))\bigr)
= (x,y).
\]
This establishes (ii) (possibly after shrinking $U_0$ if needed).

\medskip
\noindent\textbf{Step 5: Vertical differential at the zero section.}

Consider the vertical derivative of $\delta$ at $(x,0_x,0)$, restricted to
the $T_x^\ast X$-component. For $\alpha$ small, we have 
$v=\alpha^\sharp$ small in $T_xX$, so that $\|v\|_g < r(x)$ and 
$\lambda(x,\alpha)=1$. Hence $v'(x,\alpha)=v$ in a neighbourhood of
$(x,0_x)$, and
\[
\delta(x,\alpha,0)
= \bigl(x,\exp_x(\alpha^\sharp)\bigr).
\]
Differentiating at $\alpha=0_x$, we obtain
\[
D_\alpha\delta(x,0_x,0)(\beta)
= \bigl(0,\, D\exp_x(0_x)(\beta^\sharp)\bigr)
= \bigl(0,\,\beta^\sharp\bigr),
\]
for all $\beta\in T_x^\ast X$, since $D\exp_x(0_x)=\mathrm{Id}_{T_xX}$.
Under the identification $T_{(x,x)}(X\times X)\simeq T_xX\oplus T_xX$,
this means that $D^{\mathrm{vert}}_\alpha\delta(x,0_x,0)$ maps 
$T_x^\ast X$ isomorphically onto the subspace $\{0\}\oplus T_xX$ via
$\beta\mapsto (0,\beta^\sharp)$, which proves (iii).

\medskip
This shows that $\delta$ has the required properties, and that,
near the diagonal, $\delta$ and $\nu$ are local inverses in the second
variable with the correct linear behaviour encoded by the Riemannian metric.
\end{proof}

}

\subsection{Distance Induced by a Linearization}

We now introduce an intrinsic structure for measuring convergence on $X$ using the linearization $(\delta,\nu)$. Note that the map $\delta$ plays a central role in this construction.

{\color{black}
\begin{Definition}[Distance induced by a linearization]
Let \( X \) be a diffeological space equipped with a linearization \((\delta,\nu)\) and
a smooth function \( \mathrm{gap} : E \to \mathbb{R} \) (called the \emph{gap function})
satisfying:
\begin{itemize}
  \item[(i)] \( \mathrm{gap}(0_x) = 0 \) for all \( x \in X \);
  \item[(ii)] \( \mathrm{gap}(\nu(x,y)) = 0 \) if and only if \(x = y\);
  \item[(iii)] for each fixed \(x\), the map \(y \mapsto \mathrm{gap}(\nu(x,y))\) is smooth.
\end{itemize}
Then the functions
\[
  d_X: (x,y) \in X^2 \mapsto  \sup_{z \in X} \big| \mathrm{gap}(\nu(z,x)) - \mathrm{gap}(\nu(z,y)) \big|, \]
  \[
  \forall x \in x, d_E: (u,v) \in E_x^2 \mapsto   d_X(\delta(u - v)) + d_X(\delta(v - u)).
\]
define pseudo-distances on \(X\) and on each fiber of \(E\), respectively.
When these pseudo-distances are non-degenerate, they are called the
\emph{distances induced by the linearization} \((\delta,\nu)\).
\end{Definition}}
To simplify notation, subscripts are dropped when context permits.

\begin{Theorem}
	Let $X$ be a diffeological space equipped with a linearization $(\delta, \nu)$ and a gap function satisfying Assumption D. Then $d_X$ is a distance on $X$ and $d_E$ is a distance on each fiber of $E$.
\end{Theorem}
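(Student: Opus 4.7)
The plan is to verify each of the four metric axioms—non-negativity, symmetry, identity of indiscernibles, and the triangle inequality—first for $d_X$ and then for $d_E$ on a fixed fiber of $E$. Non-negativity and symmetry of $d_X$ are immediate from the absolute value inside the supremum, and boundedness of $gap$ together with Assumption D ensures finiteness of that supremum, so $d_X$ is genuinely $\R$-valued.

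For the triangle inequality of $d_X$, I would apply the pointwise inequality
\[
|gap(\nu(z,x)) - gap(\nu(z,y))| \le |gap(\nu(z,x)) - gap(\nu(z,w))| + |gap(\nu(z,w)) - gap(\nu(z,y))|
\]
and pass to the supremum over $z$. For identity of indiscernibles, $d_X(x,x) = 0$ is clear from condition (4) of Definition~\ref{d:linearisation} together with $gap(0) = 0$. Conversely, if $d_X(x,y) = 0$, I would specialize $z = x$: then $|gap(\nu(x,x)) - gap(\nu(x,y))| = 0$ gives $gap(\nu(x,y)) = 0$, so $\nu(x,y) = 0$ in $E_x$ by the gap axiom of Assumption D, and condition (4) of the linearization forces $x = y$.

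For $d_E$ on a fiber $E_{x_0}$, symmetry is built into the symmetric definition. To get $d_E(u,u) = 0$, note that $u - u = 0$ lies in the zero section of $T^*X \subset E$, so condition (2) of Definition~\ref{d:linearisation} places $\delta(0)$ on the diagonal $\Delta(X)$, and hence $d_X(\delta(0)) = 0$ by the identity-of-indiscernibles part already established for $d_X$. For the converse, $d_E(u,v) = 0$ forces both $\delta(u-v)$ and $\delta(v-u)$ to lie on $\Delta(X)$; invoking condition (2)—together with the injectivity clause (7) when $u-v$ is not assumed to lie in $T^*X$—then yields $u = v$.

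The main obstacle is the triangle inequality for $d_E$, because $\delta$ is not linear and $\delta(u-w)$ admits no algebraic decomposition in terms of $\delta(u-v)$ and $\delta(v-w)$. My approach is to expand $d_X(\delta(u-w))$ through its supremum form and insert the intermediate points $p_2 \circ \delta(u-v)$ and $p_2 \circ \delta(v-w)$ as waypoints, estimating each segment with the triangle inequality of $d_X$ already proved. If that chain cannot be closed using only the stated axioms, the proof will have to invoke an additional compatibility between $\delta$ and the vector operations on each fiber—effectively a sub-additivity estimate of the form $d_X(\delta(u-w)) \le d_X(\delta(u-v)) + d_X(\delta(v-w))$—which may either be implicit in a stronger reading of the linearization axioms or should be added as a standing hypothesis. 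This is the step I expect to carry the real content of the theorem.
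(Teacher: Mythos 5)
Your treatment of $d_X$ is essentially the paper's own proof: the same pointwise triangle inequality passed to the supremum over $z$, and the same specialization $z=x$ for separation, using condition (4) of Definition~\ref{d:linearisation} together with the gap axiom $gap(v)=0 \iff v=0$. That half of your argument is complete and correct.

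The interesting part is $d_E$, and your suspicion about its triangle inequality is well-founded: the paper's own proof never addresses it. The enumerated verification in the paper stops after the triangle inequality for $d_X$, and the $d_E$ claims appear only in the one-line assertion that ``similarly'' $d_E(u,v)=0$ if and only if $u=v$; no triangle inequality for $d_E$ is proved or even sketched. Your analysis of why the step resists proof is accurate: $d_E(u,w)$ involves $d_X(\delta(u-w))$, and nothing in Assumption D or Definition~\ref{d:linearisation} relates the point $p_2\circ\delta(u-w)$ to $p_2\circ\delta(u-v)$ and $p_2\circ\delta(v-w)$, since $\delta$ is not assumed linear or subadditive along fibers; a waypoint argument inside the supremum therefore cannot close, and an estimate of exactly the form $d_X(\delta(u-w)) \le d_X(\delta(u-v)) + d_X(\delta(v-w))$ would indeed have to be added as a hypothesis (or $d_E$ restricted to a setting where it holds). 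So what you flagged as ``the step carrying the real content'' is a genuine gap --- but in the paper, not merely in your attempt. One further caution on your separation argument for $d_E$: condition (2) controls $\delta^{-1}(\Delta(X))$ only on $T^*X$, and clause (7), as stated, concerns the map induced on $T^*_xX$ by $(\delta\circ\nu)(x,\cdot)$; it does not deliver injectivity of $\delta$ on a whole fiber $E_x$, so for $u-v \in E_x \setminus T^*_xX$ the implication $\delta(u-v)\in\Delta(X)\Rightarrow u=v$ is likewise unestablished --- a point the paper also glosses over. In short: your $d_X$ part matches the paper; your $d_E$ part is incomplete exactly where the paper's is, and your diagnosis that an additional fiberwise compatibility between $\delta$ and the vector operations is required is the correct conclusion.
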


\begin{proof}
	We verify the axioms of a metric for $d_X$ and $d_E$:
	
	\begin{enumerate}
		\item \emph{Separating property:}
		\[
		gap(\nu(x,y)) \neq 0 \iff x \neq y.
		\]
		Thus, $d_X(x,x) = 0$, and if $x \neq y$, then choosing $z = x$ gives $d_X(x,y) > 0$.
		
		Similarly, $d_E(u,v) = = 0$ if and only if $u = v$.
		
		\item \emph{Symmetry:} immediate from definitions.
		
		\item \emph{Triangle inequality:} For $x,y,z \in X$ and all $w \in X$:
		\begin{eqnarray*}
		\left| gap(\nu(w,x)) - gap(\nu(w,z)) \right| 
		&\leq &\left| gap(\nu(w,x)) - gap(\nu(w,y)) \right| \\
        &&+ \left| gap(\nu(w,y)) - gap(\nu(w,z)) \right|.
		\end{eqnarray*}
		Taking suprema yields $d_X(x,z) \leq d_X(x,y) + d_X(y,z)$.
		\end{enumerate}
	\end{proof}

{\color{blue}
\begin{remark}
Convexity of the gap function is not required for these properties.
\end{remark}}

\begin{Definition}
Let $\tilde{X}$ denote the metric completion of $X$ with respect to $d_X$.
\end{Definition}

\section{Gradient and steepest descent methods... (Almost) Without gradient}

Let us now consider a minimization problem of a given functional $F: X \rightarrow \R$ or an equation $F(u) = 0$ on a diffeological space\footnote{We recall that solving $F(x)=0$ is equivalent to solving the minimization problem for $|F|.$}. We define the following basic notion:

\begin{Definition} \label{d:method}
	A \textbf{method} $\mathcal{M}$ is a mapping that depends on:
	\begin{itemize}
		\item a smooth function $f \in C^\infty(I; \mathbb{R})$, where $I \subset \mathbb{R}$ is compact,
		\item and a point $x \in I$,
	\end{itemize}
	and produces a point $\mathcal{M}_f(x) \in I$ such that:
	\begin{enumerate}
		\item $\forall x \in I,\quad f(\mathcal{M}_f(x)) \leq f(x)$,
		\item $\forall x \in I,\quad f(\mathcal{M}_f(x)) = f(x) \iff \mathcal{M}_f(x) = x$,
		\item \label{iii-method} $\mathcal{M}_f$ is smooth on any subinterval $J \subset I$ where $df \neq 0$.
	\end{enumerate}
\end{Definition}

By this definition, we intend to define a method of the type of the gradient methods, Newton methods for convex functions, and more generally \minor{any descent method with exact line search} that (often approximatively at a finite number of iterations) intend to produce the minimum of a smooth function $f$ of prescribed type on a compact interval.  
\subsection{The Algorithm Associated to a Method $\mathcal{M}$}

To provide context, we first sketch the general principle of the method. The core idea is to build a smooth curve $\gamma$ on $X$, replacing the gradient direction used in classical optimization by a path derived from the linearization.

This path is defined by the linearization map $\delta$, such that:
\[
\gamma(t) = \delta(t \, d_xF).
\]
{\color{black}
\vspace{12pt}
\begin{algorithm}[H]
\caption{Diffeological line-search method induced by a linearization}\label{alg:main}
\KwData{Initial point $x_0 \in X$, maximal number of iterations $N_{\max}$, 
compact search interval $I = [0,T]\subset\mathbb{R}$.}
\KwResult{A sequence $(x_n)_{n\geq 0} \subset X$ with non-increasing values of $F$.}

\For{$n = 0,1,\dots,N_{\max}$}{
  \tcp{Construction of a search curve from the linearization at $x_n$}
  Construct a smooth curve $\gamma_n : I \to X$ with $\gamma_n(0)=x_n$
  using the linearization $(E,\delta,\nu)$ at $x_n$\;
  
  \tcp{Model or evaluation of $F$ along the curve}
  Define the real-valued function
  \[
    \varphi_n(t) := F(\gamma_n(t)), \qquad t\in I.
  \]
  Optionally, introduce a method $\mathcal{M}_F(\gamma_n):I\to\mathbb{R}$ approximating $\varphi_n$,
  and use it only to initialise the line search\;
  
  \tcp{Exact line search on a compact interval}
  Choose $t_n \in I$ such that
  \[
    \varphi_n(t_n) \leq \varphi_n(t) \quad \text{for all } t\in I
    \quad\text{(exact minimizer on $I$).}
  \]
  
  \tcp{Update of the iterate}
  Set
  \[
    x_{n+1} := \gamma_n(t_n).
  \]
  
  \tcp{Monotonicity of the objective}
  Then $F(x_{n+1}) = \varphi_n(t_n) \leq \varphi_n(0) = F(x_n)$\;
}
\end{algorithm}
}

\begin{remark} Our algorithm is an adaptation of the algorithm presented in \cite{GW2020,GW2021a,GW2021b}, where the authors use a retraction map from $TX$ to $X$ in place of $\delta$, and replace the gradient by the differential $df$ in $T^*X$.
\end{remark}

\subsection{Classical A Priori Convergence Results to Weak Solutions}

{\color{black}
\begin{Definition}[Algorithmic stationarity]\label{def:alg-stationary}
Let $X$ be a diffeological space and $F:X\to\mathbb{R}$ a function.
Assume that to each $x\in X$ is associated a smooth search curve
$\gamma_x : I \to X$ on a fixed compact interval $I=[0,T]$, with $\gamma_x(0)=x$.
We say that $x$ is \emph{algorithmically stationary} (with respect to $(\gamma_x)_x$ and $F$)
if for every such curve one has
\[
F(\gamma_x(0)) \leq F(\gamma_x(t)) \quad \text{for all } t\in I.
\]
Equivalently, $t=0$ is a minimizer of $t\mapsto F(\gamma_x(t))$ on $I$.
\end{Definition}
\begin{Theorem}[Limit behaviour of the line-search sequence]\label{thm:main-convergence}
Assume:
\begin{enumerate}
  \item[(H1)] $X$ is continuously embedded into a complete metric vector space $(Y,\|\cdot\|)$,
  and the closure $\overline{X}^Y$ of $X$ in $Y$ is compact.

  \item[(H2)] $F:X\to\mathbb{R}$ admits a continuous extension 
  (still denoted $F$) to $\overline{X}^Y$.

  \item[(H3)] There exists a smooth map 
  \[
    \Gamma : X\times I \longrightarrow X,\qquad (x,t)\mapsto \gamma_x(t),
  \]
  where $I=[0,T]$ is a fixed compact interval, such that for each $x\in X$,
  the curve $t\mapsto\gamma_x(t)$ is the search curve used in Algorithm~\ref{alg:main},
  and $\gamma_x(0)=x$.

  \item[(H4)] Algorithm~\ref{alg:main} performs an exact line search on $I$, i.e.\ for each $n$
  it chooses $t_n\in I$ such that
  \[
    F(\gamma_{x_n}(t_n)) \leq F(\gamma_{x_n}(t)) \quad \text{for all } t\in I,
  \]
  and sets $x_{n+1}:=\gamma_{x_n}(t_n)$.
\end{enumerate}

Let $(x_n)_{n\geq 0}$ be the sequence generated by Algorithm~\ref{alg:main} from an initial point
$x_0\in X$. Then:

\begin{enumerate}
  \item[(i)] The sequence $(F(x_n))_{n\geq 0}$ is non-increasing and converges to
  a limit $L\in(-\infty,F(x_0)]$.

  \item[(ii)] The set $S$ of cluster points of $(x_n)$ in $\overline{X}^Y$ is non-empty and compact.
  Moreover,
  \[
    F(x) = L \quad \text{for every } x\in S.
  \]
  In particular, $F$ is constant on $S$.

  \item[(iii)] Every cluster point $x^\star\in S\cap X$ is algorithmically stationary
  in the sense of Definition~\ref{def:alg-stationary} with respect to $F$ and the
  family of curves $\gamma_x$.
\end{enumerate}
\end{Theorem}
\begin{proof}
We proceed in three steps.

\medskip
\noindent\emph{Step 1: Monotonicity and convergence of $(F(x_n))$.}
By construction of Algorithm~\ref{alg:main} and assumption~(H4), we have
for each $n\geq 0$:
\[
F(x_{n+1}) = F(\gamma_{x_n}(t_n)) 
\leq F(\gamma_{x_n}(0)) 
= F(x_n).
\]
Thus $(F(x_n))_{n\geq 0}$ is a non-increasing sequence in $\mathbb{R}$.
By (H1) and (H2), $F$ is continuous on the compact set $\overline{X}^Y$, hence bounded below.
Therefore $(F(x_n))$ admits a finite limit $L\in(-\infty,F(x_0)]$:
\[
F(x_n) \downarrow L \quad \text{as } n\to\infty.
\]

\medskip
\noindent\emph{Step 2: Cluster points and constancy of $F$ on $S$.}
By (H1), the sequence $(x_n)$ takes values in $X\subset\overline{X}^Y$, whose closure is compact.
Hence $(x_n)$ admits at least one cluster point in $\overline{X}^Y$; let $S$ denote the set
of all such cluster points. Then $S$ is non-empty and compact (as a closed subset of a compact
space).

Let $x^\star\in S$. By definition, there exists a subsequence $(x_{n_k})$ such that
$x_{n_k}\to x^\star$ in $Y$ as $k\to\infty$. By continuity of $F$ on $\overline{X}^Y$ (assumption~(H2)),
we have
\[
F(x_{n_k}) \longrightarrow F(x^\star).
\]
On the other hand, by Step~1,
\[
F(x_{n_k}) \longrightarrow L,
\]
since $(F(x_n))$ converges and $(n_k)$ is an increasing sequence of indices.
Thus $F(x^\star) = L$.

As $x^\star\in S$ was arbitrary, we deduce that
\[
F(x) = L \quad \text{for all } x\in S,
\]
so $F$ is constant on $S$. This proves (ii).

\medskip
\noindent\emph{Step 3: Algorithmic stationarity of cluster points in $X$.}
Let $x^\star\in S\cap X$ be a cluster point which belongs to $X$.
By definition of $S$, there exists a subsequence $(x_{n_k})$ such that
\[
x_{n_k} \longrightarrow x^\star \quad \text{in } Y \text{ as } k\to\infty.
\]

Fix the corresponding search curve $\gamma_{x^\star}:I\to X$ given by (H3), 
with $\gamma_{x^\star}(0)=x^\star$.
Assume, for contradiction, that $x^\star$ is \emph{not} algorithmically stationary.
Then, by Definition~\ref{def:alg-stationary}, there exists $t_0\in I$ such that
\[
F(\gamma_{x^\star}(t_0)) < F(\gamma_{x^\star}(0)) = F(x^\star).
\]
Set
\[
\eta := \frac{1}{2}\Bigl(F(x^\star) - F(\gamma_{x^\star}(t_0))\Bigr) > 0.
\]

By (H3), the map $\Gamma : X\times I\to X$ is smooth, hence continuous in $x$ and $t$.
By (H2), $F$ is continuous in $X$ (and in $\overline{X}^Y$). Therefore the composite
\[
(x,t) \longmapsto F(\Gamma(x,t)) = F(\gamma_x(t))
\]
is continuous on $X\times I$.

By continuity at $(x^\star,t_0)$ and $(x^\star,0)$, there exists $\delta>0$ such that
for all $x\in X$ with $\|x - x^\star\| < \delta$ (in the metric of $Y$) we have
\begin{align*}
\bigl|F(\gamma_x(t_0)) - F(\gamma_{x^\star}(t_0))\bigr| &< \eta,\\
\bigl|F(\gamma_x(0)) - F(\gamma_{x^\star}(0))\bigr| &= \bigl|F(x) - F(x^\star)\bigr| < \eta.
\end{align*}

Since $x_{n_k}\to x^\star$, there exists $K$ such that for all $k\geq K$,
$\|x_{n_k} - x^\star\| < \delta$. For such $k$, we have in particular:
\[
F(\gamma_{x_{n_k}}(t_0)) 
< F(\gamma_{x^\star}(t_0)) + \eta
= F(x^\star) - \eta
< F(x_{n_k}) - \eta + \eta
= F(x_{n_k}),
\]
where we used $F(\gamma_{x^\star}(t_0)) = F(x^\star) - 2\eta$ and 
$|F(x_{n_k}) - F(x^\star)|<\eta$.

Now, by exact line search (assumption~(H4)), we choose $t_{n_k}\in I$ with
\[
F(\gamma_{x_{n_k}}(t_{n_k})) \leq F(\gamma_{x_{n_k}}(t)) \quad \text{for all } t\in I.
\]
In particular,
\[
F(x_{n_k+1}) = F(\gamma_{x_{n_k}}(t_{n_k})) \leq F(\gamma_{x_{n_k}}(t_0)) < F(x_{n_k}).
\]

By Step~1, $(F(x_n))$ converges to $L$. Passing to the limit $k\to\infty$ in the
inequality $F(x_{n_k+1}) < F(x_{n_k})$ and using continuity, we obtain
\[
L \leq \lim_{k\to\infty} F(x_{n_k+1})
\leq \lim_{k\to\infty} F(\gamma_{x_{n_k}}(t_0))
= F(\gamma_{x^\star}(t_0))
< F(x^\star) = L,
\]
which is a contradiction.

Hence our assumption was false, and $x^\star$ must satisfy
\[
F(\gamma_{x^\star}(0)) \leq F(\gamma_{x^\star}(t)) \quad \text{for all } t\in I,
\]
i.e.\ $x^\star$ is algorithmically stationary in the sense of
Definition~\ref{def:alg-stationary}.  This proves (iii) and completes the proof.
\end{proof}

}

\section{Application to Mapping Spaces with Low Regularity}

The notion of manifolds of smooth maps used here follows the framework in \cite{Ee}; see also \cite{KM}, \cite{Freed1988}. Let $E$ be a smooth vector bundle over a compact, connected, boundaryless manifold $M$ of dimension $m$. Denote by $C^\infty(M,E)$ the space of smooth sections $\sigma$ of $E$.

Let $\K = \R$ or $\mathbb{C}$, and let $N$ be a smooth Riemannian manifold without boundary. Then:

- $C^\infty(M,N)$ denotes the set of smooth maps $f: M \rightarrow N$,

- $C^\infty(M,G)$ denotes the space of smooth maps from $M$ to a Lie group $G$.

The space $C^\infty(M,N)$ is a smooth Fréchet manifold, and $C^\infty(M,G)$ is a smooth Fréchet Lie group.

The connected components of $C^\infty(M,N)$ correspond to the homotopy classes of based maps $f: M \rightarrow N$. Note that two such components need not be modeled on the same topological vector space. However, if $f$ and $g$ lie in the same path-connected component, then the bundles $f^*TN$ and $g^*TN$ are isomorphic. The atlas structure of $C^\infty(M,N)$ can be found e.g. in \cite{Ma2006}.

\subsection{Spaces of Mappings with Low Regularity}

In the following, we consider a complex algebra $\mathcal{A}$ equipped with the Hermitian matrix product $(A,B) \mapsto \operatorname{tr}(AB^*)$. When unambiguous, we denote the corresponding norm by $\| \cdot \|$ or $\| \cdot \|_{\mathcal{A}}$. {\color{black} Defining Sobolev classes of functions between two manifolds, and studying the properties of the space of functions, is a long-standing problem, coming from the investigations on the structures of smooth infinite dimensional manifolds \cite{Ee} and rapidly raised in various contexts, see e.g. \cite{BZ1988,Bet1991,BM2021,Det2025,EeL1978,Mir2017}. Depending on the definitions, the space of smooth mappings is dense, or not, in these function spaces with low regularity. For our purpose,   
we follow and generalize \cite{Ma2019,GMW2023} and we choose to follow a definition based on density arguments:}

\begin{Definition} \label{d:sobolev} Let $(s,p) \in \mathbb{R} \times (1,+\infty)$, let $M$ be a compact Riemannian manifold without boundary, and let $N$ be a Riemannian submanifold embedded in $\mathcal{M}$. {\color{black} We define $W^{s,p}(M,\mathcal{A})$ as the completion of $C^\infty(M,\mathcal{A})$ for the norm \[f \mapsto \left(\int_M || (1 + \Delta)^{s/2} f ||_\mathcal{A}^p\right)^{1/p}\] where $\Delta$ is the positive Laplacian.}
	Then, the space $W^{s,p}(M,N)$ is defined as the completion of $C^\infty(M,N)$ in $W^{s,p}(M,\mathcal{A})$ {\color{black} for the strong topology.}
\end{Definition}

We emphasize that the classical condition $s < \dim(M)/p$ \textcolor{black}{(necessary for the existence of an atlas \cite{Ee})} is not assumed here. When $s \leq \dim(M)/p$, this results in a weaker differential structure on $W^{s,p}(M;\mathcal{A})$, as detailed in \cite{Ma2019}.

\begin{Proposition}
	The space $W^{s,p}(M,N)$ is a diffeological space equipped with the subset diffeology inherited from $W^{s,p}(M,\mathcal{A})$.
\end{Proposition}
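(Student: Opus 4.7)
The plan is to reduce the claim to the general subset-diffeology construction recalled in Section~1, once we know that the ambient space $W^{s,p}(M,\mathcal{A})$ is itself a diffeological space. No deep analytic argument is needed: the content is that the Sobolev completion sits inside an ambient Banach space whose diffeology is well understood, and the subset diffeology is then automatic.

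First, since $M$ is a compact boundaryless Riemannian manifold and $\mathcal{A}$ is a normed algebra for the Hermitian norm $\|\cdot\|_{\mathcal{A}}$, the space $W^{s,p}(M,\mathcal{A})$ is a Banach space for every $(s,p) \in \mathbb{R} \times (1,+\infty)$. As a Fr\'echet (in fact Banach) space, it admits a canonical reflexive diffeology, for instance the nebulae diffeology generated by its set of smooth real-valued functionals in the sense of (\ref{neb}); by the result of \cite{MR2016} recalled in Section~1, this diffeology coincides with the G\^ateaux-smooth and Fr\'echet-smooth parametrizations, so the choice is unambiguous. Next, by definition $W^{s,p}(M,N)$ is the closure of $C^\infty(M,N)$ inside $W^{s,p}(M,\mathcal{A})$ for the $W^{s,p}$-norm, so it is in particular a subset of $W^{s,p}(M,\mathcal{A})$. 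That $C^\infty(M,N)$ actually embeds into $W^{s,p}(M,\mathcal{A})$ rests on the embedding of $N$ into the ambient matrix algebra together with the compactness of $M$, which forces smooth maps $M \to N$ to be bounded in every Sobolev norm.

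Finally, I would invoke the subset diffeology construction recalled just before Proposition~\ref{cvar}: for any subset $X_0$ of a diffeological space $(X,\mathcal{P})$, the family $\mathcal{P}_0 = \{p \in \mathcal{P} \mid \mathrm{Im}(p) \subset X_0\}$ is automatically a diffeology on $X_0$. Specialising to $X_0 = W^{s,p}(M,N)$ inside $X = W^{s,p}(M,\mathcal{A})$ produces the desired structure. The only step asking for verification is that the three diffeology axioms (constants, locality/sheaf condition, pre-composition with smooth maps between open subsets of Euclidean spaces) descend to the restricted family, which is immediate since they hold in the ambient space and constrain only plots whose image already lies in $X_0$.

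The main obstacle, if one may speak of one, is purely of a bookkeeping nature: one must be explicit that the definition of $W^{s,p}(M,N)$ as a completion is to be read \emph{inside} the ambient Banach space, so that the subset relation makes literal sense. No completeness, approximation, or convexity argument is required for this statement; such considerations only enter when one wishes to exploit—as in the preceding sections—the metric or variational structure on $W^{s,p}(M,N)$, or to compare its diffeology with the manifold diffeology on $C^\infty(M,N)$ inherited from \cite{Ee}.
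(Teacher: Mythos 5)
Your argument is correct and is exactly the (implicit) justification the paper relies on: the statement follows immediately from the subset-diffeology construction recalled in Section~1, applied to the completion $W^{s,p}(M,N)$ sitting inside the ambient space $W^{s,p}(M,\mathcal{A})$ with its canonical diffeology as a topological vector space. The paper offers no further proof, so your more explicit verification of the three diffeology axioms and of the embedding $C^\infty(M,N)\subset W^{s,p}(M,\mathcal{A})$ is faithful bookkeeping rather than a different route.
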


This completion coincides with the $\overline{X}$ construction used in earlier sections.

{\color{black}
\begin{remark}[Completions of \( C^\infty(M,N) \) in Sobolev-type spaces] The choices made in Definition \ref{d:sobolev} may raise many questions and objections due to various considerations on completion, topology and even intuition on function spaces of low regularity, as already mentioned. In particular, a superficial reading may consider this definition as misleading or false considering \cite{BZ1988,Bet1991}. For these reasons, we feel the need to precise the few following aspects.
Let \( M \) and \( N \) be compact smooth manifolds and \( 1 \le p < \infty \).
\begin{itemize}
  \item[(a)] For \( s \in \mathbb{R}_+^* \), the Sobolev-type space \( W^{s,p}(M,N) \) is classically defined as follows for $p$ large enough:
 \( W^{s,p}(M,N) \) is the completion of \( C^\infty(M,N) \)
  with respect to the metric induced by a fixed embedding \( N \hookrightarrow \mathbb{R}^k \)
  and the standard Sobolev norm on \( W^{s,p}(M,\mathbb{R}^k) \), which perfectly fits with pour definition.
  \item[(b)] If \( s = 0 \), then \( W^{0,p}(M,N) = L^p(M,N) \), 
  \item[(c)] If \( s < 0 \), one can make an objection, because \( W^{s,p}(M, \mathcal{A}) \) is defined as the dual space
  \( (W^{-s,p'}(M,\mathcal{A}))' \) and this suggests that the ``good'' topology for completionis the weak one  whenever this makes sense, in particular, when \(N\)
  is a compact subset of a Banach space, this dual coincides with the completion
  of \( C^\infty(M,N) \) in the weak topology.
\end{itemize}
However, these are features that appear with choices in the definitions of \( W^{s,p}(M, \mathcal{A}) \) different from Definition \ref{d:sobolev}. However, we can anyway conclude that 
the resulting space \( W^{s,p}(M,N), \) following any of the definitions existing in the literature, inherits, as a subspace of a locally convex topological vector space, a natural diffeological structure
as the subset diffeology induced from \( W^{s,p}(M,\mathcal{A}) \), where
\( \mathcal{A} \) is a linear ambient space containing \(N\).
\end{remark}}

\subsection{From Linearizations on $N$ to Linearizations on $C^\infty(M,N)$ and the Equation $F(u) = 0$ in $W^{s,p}(M,N)$}

Since $N$ is compact, it admits a linearization $(\delta,\nu)$ between $TN \times \mathbb{R}$ and $N \times N$.

\begin{Definition}
	Let $f \in C^\infty(M,N)$. Then:
	\begin{itemize}
		\item For all $g \in C^\infty(M,N)$, define $\nu(f,g) : x \mapsto \nu(f(x),g(x)) \in f^*(T^*N) \times \mathbb{R}$.
		\item For any section $s$ of $f^*(T^*N) \times \mathbb{R}$, define $\delta(s) = (f,g)$ such that for all $x \in M$, $(f(x),g(x)) = \delta(s(x))$.
	\end{itemize}
\end{Definition}

\begin{Proposition}
	The maps
	\[
	\delta \in C^\infty(C^\infty(M,TN \times \mathbb{R}), C^\infty(M,N) \times C^\infty(M,N)),
	\quad
	\nu \in C^\infty(C^\infty(M,N) \times C^\infty(M,N), C^\infty(M,TN \times \mathbb{R}))
	\]
	define a linearization of $C^\infty(M,N)$.
\end{Proposition}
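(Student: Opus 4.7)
The plan is to verify, one by one, the seven axioms of Definition~\ref{d:linearisation} for the pointwise construction, reducing each one to its already-established finite-dimensional counterpart on $N$. The ambient pseudo-bundle is $E = C^\infty(M, TN \times \R)$, viewed as a diffeological pseudo-bundle over $C^\infty(M,N)$ via the projection $s \mapsto \pi \circ s$, where $\pi : TN \times \R \to N$ is the canonical projection to the first factor. The embedding $T^*C^\infty(M,N) \subset E$ is obtained by identifying, as in Section~\ref{ss:man}, $T^*_fC^\infty(M,N)$ with (a subspace of) smooth sections of the pullback bundle $f^*T^*N \simeq f^*TN$ via the fixed Riemannian metric on $N$.

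The first step is to dispose simultaneously of the smoothness axioms (1) and (3), together with the diagram commutations (2) and (6). By the exponential law of Proposition~\ref{cvar}, a map $P : U \to C^\infty(M,N) \times C^\infty(M,N)$ is smooth if and only if its adjoint $\tilde P : U \times M \to N \times N$ is smooth, and similarly for $E$-valued plots. Composing $\tilde P$ pointwise with the finite-dimensional smooth maps $\nu_N$ and $\delta_N$ preserves smoothness, giving (1) and (3). The commutativity of the diagrams in (2) and (6) holds pointwise over $M$ by the corresponding finite-dimensional diagrams, hence it holds globally.

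Axiom (4) follows at once from its pointwise version: $\nu(f,g)(x) = \nu_N(f(x),g(x))$ vanishes identically in $x$ iff $f(x) = g(x)$ for every $x \in M$, i.e.\ iff $f = g$. For axiom (5), given a smooth variation $t \mapsto g_t$ with $g_0 = f$ and velocity $h = \partial_t g_t\bigl|_{t=0}$,
\[
d_f\nu(f,\cdot)(h)(x) \;=\; d_{f(x)}\nu_N(f(x),\cdot)\bigl(h(x)\bigr),
\]
and this is non-zero as soon as $h(x_0)$ is a direction at some $x_0 \in M$ where $d_{f(x_0)}\nu_N(f(x_0),\cdot)$ is non-zero, which exists by the $N$-level property.

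The main obstacle is axiom (7). The strategy is to show that the induced endomorphism on $T^*_f C^\infty(M,N)$ is obtained by postcomposing a section $\xi$ of $f^*T^*N$ pointwise with the fiberwise injective map furnished by axiom (7) for $(\delta_N,\nu_N)$ at each $f(x) \in N$. Concretely, one writes $(\delta\circ\nu)(f,\cdot) \in C^\infty(C^\infty(M,N),C^\infty(M,N))$, differentiates at $f$ via Proposition~\ref{cvar}, and observes that the resulting tangent map is the pointwise application of the finite-dimensional tangent map, whose transpose is the pointwise application of the finite-dimensional cotangent map. Pointwise injectivity then forces global injectivity: if the image section vanishes identically then $\xi(x) = 0$ for every $x$, hence $\xi = 0$. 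A small technical point, resolved by Proposition~\ref{cvar}, is that pullback along $f$ of the finite-dimensional construction preserves smoothness of the section, so the endomorphism indeed lands in $T^*_f C^\infty(M,N) \subset E_f$.
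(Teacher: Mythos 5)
You should first know that the paper states this Proposition \emph{without any proof}, so there is no official argument to compare yours against; judged on its own, your pointwise reduction via the exponential law (Proposition \ref{cvar}) is the natural and almost certainly the intended argument. Your handling of the smoothness axioms (1) and (3) by passing to adjoints, the pointwise verification of the diagram conditions, and the reductions of axioms (4) and (5) to their finite-dimensional counterparts on $N$ are all correct as stated (for axiom (5), choosing $h$ supported near a point $x_0$ where $d_{f(x_0)}\nu_N(f(x_0),\cdot)\neq 0$ works since $M$ is compact).

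Two gaps remain. First, axiom (2) of Definition \ref{d:linearisation} has two clauses and you only check the diagram: the condition that $\delta^{-1}_{|T^*X}(\Delta(X))$ equals the zero section of $T^*X$ is never verified. It does follow pointwise --- if $\delta_N(s(x)) \in \Delta(N)$ for all $x\in M$ then $s(x) = 0$ for all $x$ by the $N$-level property, hence $s = 0$ --- but you must say it. Second, and more seriously, both the required inclusion $T^*C^\infty(M,N) \subset E$ and your argument for axiom (7) rest on identifying $T^*_f C^\infty(M,N)$ with smooth sections of $f^*T^*N$, and on the claim that the cotangent map of $(\delta\circ\nu)(f,\cdot)$ acts by pointwise application of the finite-dimensional cotangent maps. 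In the diffeological category neither is automatic: the cotangent pseudo-bundle is built from diffeological $1$-forms and its fibers can behave unexpectedly, as the paper itself stresses in its discussion of vector pseudo-bundles. Your parenthetical ``(a subspace of)'' concedes exactly this point, but it then undercuts your conclusion: axiom (7) demands injectivity on all of $T^*_f X$, so if the diffeological cotangent fiber is strictly larger than the space of pointwise-defined covector sections, pointwise injectivity of the $N$-level maps no longer suffices. To close this you would need either to establish the identification $T^*_f C^\infty(M,N) \simeq C^\infty(M, f^*T^*N)$ in the functional diffeology, or to show that every covector at $f$ is determined by pointwise data on which the induced map acts compatibly. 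Since the paper silently makes the same identification, your proof is at the level of rigor the statement presupposes, but this is the one step where the argument, as written, could genuinely fail and it should be flagged rather than asserted.
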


We now consider a functional equation:
\[
F(u) = 0, \quad \text{with } F : D \subset W^{s,p}(M,N) \rightarrow \mathbb{R},
\]
where $D \cap C^\infty(M,N)$ is dense in $D$, and $D$ is closed in $W^{s,p}(M,N)$. We assume that:

\vskip 12pt
\hrule
\vskip 6pt
\noindent \textbf{Assumption R.}
The linearization $(\delta,\nu)$ on $C^\infty(M,N)$ restricts to a linearization on the subset $D \cap C^\infty(M,N)$ equipped with the subset diffeology.
\vskip 6pt
\hrule
\vskip 12pt

This assumption is non-trivial in general. For instance, if $D \cap C^\infty(M,N)$ inherits the discrete diffeology, then $T^*C^\infty(M,N) = C^\infty(M,N)$ and the cotangent space has trivial fibers. This highlights the potential limitations of such restrictions, and suggests that constructing a dedicated linearization directly on $D \cap C^\infty(M,N)$ may be more effective for algorithmic implementations.

In what follows, we restrict ourselves to the case $D = W^{s,p}(M,N)$ and $D \cap C^\infty(M,N) = C^\infty(M,N)$.

{\color{black}
\begin{Theorem}[Structural properties of $W^{s,p}(M,N)$]\label{thm:Ws-p-structure}
Let $M$ be a compact smooth manifold of dimension $m$, and let 
$N\subset A$ be a compact embedded submanifold of a finite-dimensional
vector space $A \simeq \mathbb{R}^k$.
Fix $1\leq p<\infty$ and $s\in\mathbb{R}$.

\begin{enumerate}
  \item[(i)] For $s\geq 0$, we define $W^{s,p}(M,A)$ as the usual Sobolev--Slobodeckij
  (or Bessel potential) space ([Adams--Fournier, Sobolev Spaces, Chap.~7]). 
  For $s<0$, we set
  \[
    W^{s,p}(M,A) := \bigl(W^{-s,p'}_0(M,A)\bigr)^\ast,
  \]
  where $p'$ is the Hölder conjugate of $p$ and $W^{-s,p'}_0(M,A)$ is the closure
  of $C^\infty_0(M,A)$ in $W^{-s,p'}(M,A)$
  (cf.\ e.g.\ Adams--Fournier, Thm.~7.48).

  \item[(ii)] If $s<0$, the inclusion 
  \[
    C^0(M,A) \hookrightarrow W^{s,p}(M,A)
  \]
  is continuous. More precisely, since $M$ is compact, every continuous 
  $A$-valued map is in $L^p(M,A)$, and the canonical injection
  $L^p(M,A) \hookrightarrow W^{s,p}(M,A)$ (viewing $L^p$ as a subspace of
  $W^{s,p}$ via distributions) is continuous; hence the composition
  $C^0(M,A) \hookrightarrow L^p(M,A) \hookrightarrow W^{s,p}(M,A)$ is
  continuous.

  \item[(iii)] The space $C^\infty(M,N)\subset C^0(M,N)$ is bounded in
  $C^0(M,N)$: indeed, since $N$ is compact, there exists $R>0$ such that
  $\|u(x)\|_A \leq R$ for all $u\in C^\infty(M,N)$ and all $x\in M$, so
  \[
    \|u\|_{C^0(M,A)} \leq R
    \quad \text{for all } u\in C^\infty(M,N).
  \]
  In particular, for $s<0$, the inclusion
  $C^\infty(M,N)\hookrightarrow W^{s,p}(M,A)$ is bounded by (ii).

  \item[(iv)] Define $W^{s,p}(M,N)$ as the closure of $C^\infty(M,N)$ in
  $W^{s,p}(M,A)$:
  \[
    W^{s,p}(M,N) := \overline{C^\infty(M,N)}^{\,W^{s,p}(M,A)}.
  \]
  Then $W^{s,p}(M,N)$ is a closed subset of $W^{s,p}(M,A)$ and the canonical inclusion
  \[
    W^{s,p}(M,N) \hookrightarrow W^{s,p}(M,A)
  \]
  is continuous and maps bounded subsets of $W^{s,p}(M,N)$ into bounded
  subsets of $W^{s,p}(M,A)$.
\end{enumerate}

No general relative compactness statement for $W^{s,p}(M,N)$ in
$W^{s,p}(M,A)$ is claimed here; in our applications 
we instead assume directly that the relevant sublevel sets of the functional
are relatively compact in $W^{s,p}(M,A)$.
\end{Theorem}

\begin{proof}
Item (i) is the standard definition of Sobolev spaces of positive and negative
order; see e.g.\ Adams--Fournier, Sobolev Spaces, Chap.~7, in particular Thm.~7.48
for the duality characterization when $s<0$.

For (ii), since $M$ is compact, the inclusion $C^0(M,A)\hookrightarrow L^p(M,A)$
is continuous. The identification of $L^p(M,A)$ as a subspace of $W^{s,p}(M,A)$
for $s<0$ is again standard in the theory of negative-order Sobolev spaces,
and the corresponding inclusion is continuous; composing both inclusions yields
the claim.

For (iii), the image of $M$ under any $u\in C^\infty(M,N)$ is contained in the
compact set $N$, hence the $C^0$-norm of $u$ is uniformly bounded in terms of
$\sup_{y\in N}\|y\|_A$. Together with (ii), this yields boundedness in
$W^{s,p}(M,A)$ when $s<0$.

Item (iv) follows from the definition of $W^{s,p}(M,N)$ as a closure in the
Banach space $W^{s,p}(M,A)$: closedness and continuity of the inclusion are
immediate. The last sentence is included to emphasize that, for the purposes
of our convergence results, we do not rely on any general compactness property
of $W^{s,p}(M,N)$ in $W^{s,p}(M,A)$, but instead assume compactness at the level
of sublevel sets.
\end{proof} }
\section{Outlook: fixed-point perspectives}\label{sec:outlook}

{\color{black} The present work develops a diffeological framework for line-search
methods based solely on evaluations of the objective function along
search curves induced by a linearization.
The resulting convergence theory establishes monotonicity of the
objective values and algorithmic stationarity of cluster points under
very mild assumptions.

A natural direction for future research is to reinterpret the update
rule $x_{n+1}=\gamma_{x_n}(t_n)$ as a fixed-point iteration
$x_{n+1}=T(x_n)$ generated by the linearization; here $T$ depends
nonlinearly on both the choice of the search curve and on the outcome
of the line search.
Such interpretations arise in classical optimization when the
search direction is derived from a gradient or Newton-type model.

In the present diffeological context, establishing a genuine fixed-point
theory for the map $T$ would require additional hypotheses ensuring,
for instance, compactness or contractivity in a complete metric model.
These assumptions are well outside the scope of the present article
and are not needed for our convergence results.
Developing such a theory could nevertheless be useful in applications
to nonlinear PDEs or variational problems where a diffeological
linearization carries geometric information; we leave this perspective
for future work.}

\vskip 12pt

\paragraph{\bf Acknowledgements:} J.-P.M was partially
supported by ANID (Chile) via the FONDECYT grant \#1201894 during the first stages of this work.
He also thanks the France 2030 framework programme Centre Henri Lebesgue ANR-11-LABX-0020-01 
for creating an attractive mathematical environment.

\vskip 12pt

\paragraph{\bf Declaration of generative AI and AI-assisted technologies in the writing process}

During the preparation of this work the author used ChatGPT in order to smoothen the expression in English. After using this tool/service, the author reviewed and edited the content as needed and takes full responsibility for the content of the publication.

\vskip 12pt

\paragraph{\bf Data availability statement} No dataset is related to this work. 


\begin{thebibliography}{99}
\bibitem{Absil2008} {\color{blue} Absil, P.-A.; Mahony, R.; Sepulchre, R.; \emph{Optimization Algorithms on Matrix Manifolds}. Princeton University Press, Princeton, NJ (2008).}
 
\bibitem{BKW2024} Batubenge, A.; Karshon, Y.; Watts, J.; Diffeological, Frölicher, and differential spaces. \emph{Rocky Mountain J. Math.} {\bf 55} no. 3, 637-670 (2025).
\bibitem{BN2005} Batubenge, A.; Ntumba, P.; On the way to Fr\"olicher Lie groups
\textit{Quaestionnes mathematicae}  \textbf{28} no 1, 73--93 (2005).
\bibitem{Ber2003} Berger, M.; \textit{A panoramic overview of Riemannian geometry} Springer (2003).
{\color{black} \bibitem{BZ1988} Bethuel,F.;  Zheng, X. M.; Density of smooth functions between two manifolds in Sobolev spaces, \emph{J.
Funct. Anal.} 80 (1988), 60–75
\bibitem{Bet1991} Bethuel, F.; The approximation problem for Sobolev maps between two manifolds, \emph{Acta Math.} {\bf 167} (1991), 153–206}
\bibitem{BK}  Bokobza-Haggiag, J.; { Op\'erateurs pseudo-diff\'erentiels sur une vari\'et\'e diff\'erentiable}; {\it Ann. Inst. Fourier, Grenoble} \textbf{19,1}  125-177 (1969)

\bibitem{Boumal2023} {\color{blue}Boumal, N.; \emph{An introduction to optimization on smooth manifolds}. Cambridge University Press, Cambridge (2023).}

\bibitem{BM2021} Brezis, H.; Mironescu, P.;
\emph{Sobolev maps to the circle. From the perspective of analysis, geometry, and topology.}
Progress in Nonlinear Differential Equations and Their Applications 96. 530 p. (2021)

\bibitem{Can2020} Canarutto, D.; \emph{Gauge Field Theory in Natural Geometric Language: A revisitation of mathematical notions of quantum physics} Oxford University Press (2020)

\bibitem{Chen} K.T. Chen, \emph{Iterated Path Integrals}. Bulletin AMS \textbf{83}, 831--879 (1977).
\bibitem{CSW}  Christensen, D.; Sinnamon, G.; Wu, E.; 
The
{D-topology}  for  diffeological  spaces
,  \textit{Pacific  J.  Math.}
\textbf{272}
no1,
87–-110 (2014).

\bibitem{CW} Christensen, D.; Wu, E.; Tangent spaces and tangent bundles for
diffeological spaces, \emph{Cahiers de Topologie et G\'eom\'etrie Diff\'erentielle} Volume LVII 3-50 (2016).

\bibitem{Det2025} Detaille, A.;
An improved dense class in Sobolev spaces to manifolds. 
\emph{J. Funct. Anal.} {\bf 289} No. 2, Article ID 110894, 43 p. (2025). 

\bibitem{Dieu} Dieudonn\'e, J.; \textit{El\'ements d'analyse} Gauthier-Villars, Paris (1978)
	
\bibitem{DN2007-1} Dugmore, D.; Ntumba, P.;On tangent cones of Fr\"olicher spaces
\textit{Quaetiones mathematicae} \textbf{30} no.1,
67--83 (2007).


\bibitem{Ee} Eells, J.;  A setting for global analysis
\textit{Bull. Amer. Math. Soc.} {\bf 72} 751--807 (1966).

\bibitem{EeL1978} Eells, J.; Lemaire, L.
A report on harmonic maps. (English) Zbl 0401.58003
Bull. Lond. Math. Soc. 10, 1-68 (1978). 

\bibitem{Freed1988} Freed, D.; The Geometry of loop groups \textit{J. Diff. Geome.} \textbf{28} 223-276 (1988).

\bibitem{FK} Fr\"olicher, A; Kriegl, A; {\it Linear spaces and differentiation theory} Wiley series in Pure and Applied Mathematics, Wiley Interscience  (1988).

\bibitem{GHL2004} Gallot, S.; Hulin, D.; Lafontaine, J.;\emph{Riemannian Geometry}, Springer (2004)

{\color{black}
\bibitem{Glaeser58}
Glaeser, G.,
Fonctions composées différentiables.
\textit{Annales de l'Institut Fourier} 
\textbf{8} (1958), 1--124.

}

\bibitem{Glo2002} Gl\"ockner, H.;
Algebras whose groups of units are Lie groups.
\emph{Stud. Math.} \textbf{153} no. 2, 147-177 (2002). 

\bibitem{Glo2006} Glö\"kner, H.;
Implicit functions from topological vector spaces to Banach spaces. 
\emph{Isr. J. Math.} {\bf 155}, 205-252 (2006). 

\bibitem{GN2012} Gl\"ockner, H.; Neeb, K-H.;
When unit groups of continuous inverse algebras are regular Lie groups. 
\emph{Stud. Math.} \textbf{211} no. 2, 95-109 (2012). 

\bibitem{GW2020} Goldammer, N.; Welker, K.;
Towards optimization techniques on diffeological spaces by generalizing Riemannian concepts. \texttt{arXiv:2009.04262}

\bibitem{GW2021a}
Goldammer, N.; Welker, K.; Towards optimization techniques on diffeological spaces.
\emph{PAMM} {\bf 20} no 1, e202000040 (2021)

\bibitem{GW2021b} Goldammer, N.; Welker, K.; Optimization in diffeological spaces.
\emph{PAMM} {\bf 21} (S1), e202100260 (2021)

\bibitem{GMW2023} Goldammer, N.; Magnot, J-P.; Welker, K.; On diffeologies from infinite dimensional geometry to PDE constrained optimization. \emph{Contemp. Math.} {\bf 794} 1-48 (2024). 
 
 \bibitem{Heb1996} Hebey, E.; \emph{Sobolev Spaces on Riemannian Manifolds}. Lecture Notes in Mathematics, vol. 1635. Springer. (1996)

\bibitem{Igdiff} Iglesias-Zemmour, P.
\textit{Diffeology} 
Mathematical Surveys and Monographs \textbf{185} AMS  (2013).

\bibitem{KM} Kriegl, A.; Michor,  P.W.; 
\textit{The convenient setting for global analysis. 
AMS Math. Surveys and Monographs \textbf{53}, AMS, Providence } (1997).

\bibitem{Lau2011} Laubinger, M.; A Lie algebra for Fr\"olicher groups \textit{Indag. Math.} \textbf{21} no 3-4, 156--174 (2011) 

\bibitem{Les} Leslie, J.; On a Diffeological Group Realization of certain Generalized symmetrizable Kac-Moody 
Lie Algebras \textit{J. Lie Theory} \textbf{13},
427-442 (2003).

\bibitem{Ma2006} Magnot, J-P.; Chern forms on mapping spaces,
\textit{Acta Appl. Math.} \textbf{91}, no. 1, 67-95 (2006).

\bibitem{Ma2006-3} Magnot, J-P.; Diff\'eologie du fibr\'e d'Holonomie en dimension infinie,
\textit{ C. R. Math. Soc. Roy. Can.} \textbf{28} no4 (2006)
121-127.

\bibitem{Ma2013} Magnot, J-P.; Ambrose-Singer theorem on diffeological bundles and complete integrability
of KP equations. {\em Int. J. Geom. Meth. Mod. Phys.} {\bf 10}, no 9 Article ID
1350043 (2013).

\bibitem{Ma2018-2} Magnot, J-P.; The group of diffeomorphisms of a non-compact manifold is not regular
\textit{Demonstr. Math.} 51, No. 1, 8-16 (2018).

\bibitem{MR2016} Magnot, J-P.; Reyes, E. G.; Well-posedness of the Kadomtsev-Petviashvili hierarchy, 
Mulase factorization, and Fr\"olicher Lie groups  \textit{Ann. Henri Poincar\'e}  {\bf 21}, No. 6, 1893-1945 (2020).

	\bibitem{Ma2019}
			{Magnot, J.-P.;}
			Remarks on the geometry and the topology of the loop spaces
			$H^{s}(s^1, n),$ for $s\leq 1/2$. \emph{Int. J. Maps Math.} {\bf 2}, no 1,  14-37 (2019).

\bibitem{Ma2020-3}
Magnot, J-P.; 
On the differential geometry of numerical schemes and weak solutions of functional equations. 
\emph{Nonlinearity} {\bf 33}, No. 12, 6835-6867 (2020).

\bibitem{Ma2025-1} Magnot, J-P.; On the geometry of diffeological vector pseudobundles and infinite dimensional vector bundles: automorphisms, connections and covariant derivatives. \emph{Carpathian J. Math.} {\bf 41} no3, 687--710 (2025).

\bibitem{Mal} Malgrange, B.;
\emph{Ideals of differentiable functions.} 
Studies in Mathematics. Tata Institute of Fundamental Research 3.  106 p. (1966). 

\bibitem{Mir2017} Mironescu, P.;
Low regularity function spaces of N-valued maps are contractible. 
\emph{Math. Scand.} {\bf 121} No. 1, 144-150 (2017).

\bibitem{perv-grenoble} Pervova, E.;
Finite-dimensional diffeological vector spaces being and not being coproducts. 
Magnot, Jean-Pierre (ed.), Recent advances in diffeologies and their applications. AMS-EMS-SMF special session, Université de Grenoble-Alpes, Grenoble, France, July 18–20, 2022.  \emph{Contemp. Math.} {\bf 794}, 129-138 (2024). 

\bibitem{Sou} Souriau, J-M.; un algorithme g\'en\'erateur de structures quantiques \textbf{Ast\'erisque} (hors s\'erie) 341-399 (1985).
 
\bibitem{Wa} Watts, J.; \textit{Diffeologies, differentiable spaces and symplectic geometry}. University of Toronto,
PhD thesis (2013). arXiv:1208.3634v1.

	\bibitem{We2021}
			{Welker, K.};
			 Suitable spaces for shape optimization.
		{\em Applied Mathematics and Optimization} {\bf 84\/} Suppl. 1, S869-S902 (2021).
{\color{black}
\bibitem{Whitney34}
Whitney, H.,
Analytic extensions of differentiable functions defined in closed sets.
\textit{Transactions of the American Mathematical Society}
\textbf{36} (1934), no.~1, 63--89.

\bibitem{Whitney35}
Whitney, H.,
Differentiable functions defined in closed sets. I.
\textit{Transactions of the American Mathematical Society}
\textbf{37} (1935), no.~2, 369--389.
}
\bibitem{Wid} Widom, H.; { A complete symbolic calculus for pseudo-differential operators}; {\it Bull. Sc. Math. 2e serie} \textbf{104} 19-63  (1980).

 \bibitem{wu-grenoble} Wu, E.;
A survey on diffeological vector spaces and applications. 
Magnot, Jean-Pierre (ed.), Recent advances in diffeologies and their applications. AMS-EMS-SMF special session, Université de Grenoble-Alpes, Grenoble, France, July 18–20, 2022.  \emph{Contemp. Math.} {\bf 794}, 113-128 (2024).


\end{thebibliography}
\end{document}